\documentclass[a4paper,12pt]{article}
\usepackage[T2A]{fontenc}                      
\usepackage[cp1251]{inputenc}           
\usepackage[all]{xy}
\usepackage{amssymb}
\usepackage{cite}
\usepackage{cmap}
\usepackage{latexsym}
\usepackage{enumerate}
\usepackage{amsmath, amsthm, amscd, amsfonts, amssymb, graphicx, color}
\usepackage[left=2.5cm,right=2.5cm,top=2cm,bottom=2cm]{geometry}
\usepackage{indentfirst}
\usepackage{array}
\usepackage{bm}
\usepackage{float}

\usepackage{wrapfig}

\usepackage{authblk}
\usepackage{enumitem}

\newtheorem{theorem}{Theorem}[section]
\newtheorem{lemma}{Lemma}[section]

\usepackage[colorlinks=true, linkcolor=blue, citecolor=blue, urlcolor=blue]{hyperref}

\title{On the existence of $F$-irregular graphs for disconnected graphs $F$}

\author{Tatiana~Dovzhenok}

\affil{\small Research Laboratory ``Mathematics of Hybrid Intelligence Systems'', \\
	Francisk Skorina Gomel State University, Gomel, 246028, Belarus}

\date{} 

\begin{document}	
\maketitle

\begin{abstract}
	For a given graph $F$, a graph $G$ is $F$-irregular if all its vertices have pairwise distinct $F$-degrees, where the $F$-degree of a vertex $v$ is the number of subgraphs of $G$ containing $v$ and isomorphic to $F$. While the existence of $F$-irregular graphs has been actively studied for connected graphs $F$, the case where $F$ is disconnected remains entirely unexplored. In this paper, we provide a complete solution to this problem. We prove that for every disconnected graph $F$ with $|E(F)| \ge 2$, there exist infinitely many $F$-irregular graphs. Conversely, if $|E(F)| \le 1$, we show that no non-trivial $F$-irregular graph exists.
\smallskip
	
\textbf{Keywords:} $F$-irregular graph, $F$-degree of a vertex, disconnected graph, graph irregularity, Strong conjecture about $F$-irregular graphs.
\end{abstract}
	
\section{Introduction}
All graphs considered in this paper are simple, finite, and undirected. For a fixed graph $F$, the $F$-degree of a vertex $v$ in a graph $G$ is the number of subgraphs of $G$ isomorphic to $F$ that contain $v$. A graph $G$ is $F$-irregular if all of its vertices have pairwise distinct $F$-degrees. 

The study of $F$-irregular graphs dates back to the work of Chartrand et al.~\cite{1} (see also the monograph~\cite{2}), who conjectured that a non-trivial $F$-irregular graph exists for every connected graph $F$ on at least three vertices. The development of this problem led Dovzhenok et al.~\cite{3} to introduce the Strong conjecture, which asserts that the number of such graphs is infinite. This conjecture was verified for several infinite classes of graphs, including 2-connected graphs $F$ with $\delta(F)=2$~\cite{3} and graphs $F$ of diameter 2~\cite{4}.
\smallskip

In September 2026, Schreib~\cite{5} presented a proof of the existence conjecture, while Dovzhenok and Filuta~\cite{6} independently proposed a proof of the Strong conjecture. In light of~\cite{7}, however, the case of complete graphs~$F$ in~\cite{5} still needs to be explicitly addressed.
\smallskip

In contrast to the connected framework, this paper focuses on the existence and multiplicity of $F$-irregular graphs for \emph{disconnected} graphs~$F$. Our main result provides a complete solution to this problem, as formalized below.

\begin{theorem}\label{t1.1}
	Let $F$ be a disconnected graph.
	\begin{enumerate}
		\item If $|E(F)| \ge 2$, then there exist infinitely many $F$-irregular graphs.
		\item If $|E(F)| \le 1$, then no non-trivial $F$-irregular graph exists.
	\end{enumerate}
\end{theorem}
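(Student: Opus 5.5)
Part 2 is a direct count, and part 1 uses a construction in which the $F$-degrees are ordered by a domination argument. I would treat part 2 first. A disconnected $F$ with $|E(F)|\le 1$ is either the edgeless graph $\overline{K_k}$ with $k\ge 2$, or $K_2\cup (k-2)K_1$ with $k\ge 3$. Let $G$ have $n\ge 2$ vertices and $m$ edges.

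\textbf{Part 2, edgeless case.} For $F=\overline{K_k}$, every vertex has $F$-degree $\binom{n-1}{k-1}$, so all vertices tie.

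\textbf{Part 2, one-edge case.} For $F=K_2\cup(k-2)K_1$, I would split the copies containing $v$ according to whether $v$ lies on the edge. This gives
\[
d_F(v)=\deg(v)\binom{n-2}{k-2}+\bigl(m-\deg(v)\bigr)\binom{n-3}{k-3}=\deg(v)\binom{n-3}{k-2}+m\binom{n-3}{k-3}.
\]
Thus $d_F(v)$ is a non-decreasing affine function of $\deg(v)$. If $\binom{n-3}{k-2}>0$, distinct $F$-degrees force distinct ordinary degrees, which is impossible in a graph with $n\ge 2$ vertices. Otherwise $d_F$ is constant. In both cases no non-trivial $F$-irregular graph exists.

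\textbf{Part 1, the construction.} Write $d_F(v)=N_F(G)-N_F(G-v)$, where $N_F$ counts copies of $F$. It suffices to build graphs whose vertices are linearly ordered by neighbourhood domination. Suppose $N(u)\setminus\{w\}\subseteq N(w)\setminus\{u\}$. Then the map swapping $u$ and $w$ sends every copy of $F$ avoiding $w$ to a copy avoiding $u$, injectively. Hence $N_F(G-w)\le N_F(G-u)$, i.e. $d_F(u)\le d_F(w)$.

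I would take $G_n$ to be a threshold-type graph $v_1,\dots,v_n$ with nested neighbourhoods, so that $d_F(v_1)\le\dots\le d_F(v_n)$ holds automatically. Two points then need fixing.
\begin{enumerate}
\item \emph{Strictness.} For each consecutive pair I must exhibit a copy of $F$ through $v_{i+1}$ whose swap is not a copy. Here disconnectedness of $F$ is the tool. Since $|E(F)|\ge 2$, either some component has $\ge 2$ edges, or $F$ has $\ge 2$ nontrivial components. I would place one component so that it uses an edge at $v_{i+1}$ missing at $v_i$. The remaining components, or at least a second edge, are embedded in a large reservoir clique far from both vertices, so that each such copy is genuinely unmatched.
\item \emph{The twin pair.} Degrees in a graph cannot all be distinct, so any nested-neighbourhood graph has twins, which necessarily tie. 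I would break the tie by attaching a small gadget, for example a pendant path or a pendant copy of a component of $F$, to exactly one twin. I would then re-check that domination still holds for all other pairs, or at least that the perturbation of every $F$-degree is smaller than the gaps created in step 1.
\end{enumerate}

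\textbf{Infinitude and the main obstacle.} Infinitely many examples come from letting $n$, or the reservoir size, grow. The main obstacle is the quantitative part of point 2. The gadget changes the $F$-degrees of many vertices, by amounts polynomial in the reservoir size. I would therefore choose the reservoir size $N$ so that the gaps from step 1 are of strictly larger order in $N$ than the gadget's perturbation. Concretely, I would compare leading terms of these counts as polynomials in $N$, where they depend only on $|V(F)|$ and the component structure of $F$, and conclude distinctness for all large $N$.
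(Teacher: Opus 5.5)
Your Part~2 is correct and is the paper's argument. The edgeless case is Lemma~\ref{l2.1}, and your count $\deg(v)\binom{n-3}{k-2}+m\binom{n-3}{k-3}$ is the $H=K_2$ case of the formula in Theorem~\ref{t2.1}. Your domination lemma is also correct and neatly packages the easy monotonicity in $X_{2l}$ and $G_{2l-1}$.

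Part~1, however, is a plan rather than a proof, and it breaks at exactly the step you defer. A nested-neighbourhood graph always has twins, and a reservoir clique inside it consists of twins unless it is itself part of the staircase. So with a single repeated degree the base graph is essentially $X_{2l}$ minus the vertex $2l$. There consecutive vertices differ by one neighbour, so each gap counts copies through a fixed edge and is $O(N^{|F|-2})$. A pendant gadget on a twin $u$ raises $d_F(u)$ by order $N^{|F|-2}$. The gadget vertex itself has $F$-degree of order $N^{|F|-2}$, the same as the bottom vertices of the staircase. So the perturbation is \emph{not} of smaller order than the gaps.

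Worse, the affected pairs are no longer comparable by domination: $u$ against the vertices above it, and the gadget vertex against the bottom vertices. Their order genuinely depends on $F$:
\begin{itemize}
\item $x_1>x_2$ when $F$ is $1$-regular, but $x_2>x_1$ when $\Delta(F)>1$ (Lemmas~\ref{l3.4}, \ref{l3.5});
\item the $1$-regular case then also needs $x_3>x_1$ (Lemma~\ref{l3.6});
\item $x_{2l}>x_{2l-1}$ or $x_{2l-1}>x_{2l}$ according to whether every pendant vertex of $F$ lies in a $K_2$-component (Lemmas~\ref{l3.7}, \ref{l3.9}).
\end{itemize}
The paper settles each pair with an explicit injection that cancels most copies, followed by a binomial inequality on $l$ for the remainder. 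A leading-order comparison in $N$ cannot replace this. The $F$-degrees involved agree to leading order, and the first differing term has an $F$-dependent sign.

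For $\delta(F)\ge 2$ the plan fails earlier. A pendant vertex or pendant path lies in no copy of $F$, so it breaks no tie. A pendant copy of a component typically brings its own twins, e.g.\ a $K_{t+1}$ attached by one edge. Moreover, every vertex of degree $<\delta(F)$ has $F$-degree $0$, so at most one such vertex is allowed. This forces a nested-neighbourhood base graph to have at least $\delta(F)-1$ repeated degrees, hence several twin classes rather than one twin pair. The paper's $G_{2l-1}$ is built for exactly this situation. Its $t=\delta(F)$ lower vertices of degree exactly $t$ are separated by the red edges $E_R$. Their order comes from Lemmas~\ref{l4.5}--\ref{l4.7}, again split by whether the minimum-degree vertices of $F$ lie in $K_{t+1}$-components. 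To complete Part~1 you need a concrete construction and a proof of every comparison outside your domination lemma, with this case analysis on the structure of $F$.
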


To prove Theorem~\ref{t1.1} for $|E(F)| \ge 2$, we employ the explicit constructions and methods developed in~\cite{6}.
\smallskip

We adopt standard notation: $V(G)$ and $E(G)$ denote the vertex and edge sets of a graph $G$, respectively; $|G|$ is its order, while $\delta(G)$ and $\Delta(G)$ represent its minimum and maximum degrees. The complete graph on $m$ vertices is given by $K_m$, and its complement, the edgeless graph, by $\overline{K_m}$. A connected component of $G$ isomorphic to $K_m$ is called a \emph{$K_m$-component}. Following the convention in~\cite{6}, an edge between vertices $u$ and $v$ is denoted as $(u,v)$. We let $\mathbb{N}$ and $\mathbb{Z}^+$ denote the sets of positive and nonnegative integers, respectively.
\smallskip

Section~2 addresses the effect of isolated vertices in $F$ on $F$-irregularity and provides the proof of Theorem~\ref{t1.1} for $|E(F)| \le 1$. Sections~3 and~4 examine the existence case for disconnected $F$ under the conditions $\delta(F) = 1$ and $\delta(F) \ge 2$, respectively. Finally, Section~5 synthesizes these results to complete the proof of Theorem~\ref{t1.1}.

\section{The effect of isolated vertices in $F$ on $F$-irregularity}

We first consider the case where $F = \overline{K_m}$, and then analyze how the presence of isolated vertices in $F$ affects the problem when $F$ contains edges.

\begin{lemma}\label{l2.1}
	For any integer $m \ge 1$, there exists no non-trivial $\overline{K_m}$-irregular graph.
\end{lemma}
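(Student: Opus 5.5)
The plan is to compute the $\overline{K_m}$-degree of every vertex explicitly and observe that it does not depend on the vertex. Let $G$ be a graph of order $n \ge 2$; a non-trivial graph has at least two vertices, so this covers every case to be excluded. A subgraph of $G$ isomorphic to $\overline{K_m}$ is just a set of $m$ vertices of $G$ with no edges taken. Here I use that a subgraph is a pair $(V',E')$ with $V'\subseteq V(G)$ and $E'\subseteq E(G)$ not necessarily induced. This is the convention under which $F$-degrees count copies of $F$, and an induced count would instead give a different problem. So the copies of $\overline{K_m}$ in $G$ are in bijection with the $m$-element subsets of $V(G)$.

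Counting those subsets that contain a fixed vertex $v$ gives the $\overline{K_m}$-degree of $v$ in $G$ as $\binom{n-1}{m-1}$. This quantity depends only on $n$ and $m$, not on $v$ or on the edge set of $G$. Hence all vertices of $G$ have equal $\overline{K_m}$-degree. Since $G$ has at least two vertices, it cannot be $\overline{K_m}$-irregular. When $m > n$ every degree is $0$, which is consistent with the formula since $\binom{n-1}{m-1}=0$.

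The only point needing care is the convention above: a copy of $\overline{K_m}$ uses no edges, so the edges of $G$ play no role and each copy is determined by its vertex set. With that settled, the argument is a single binomial count. For $m=1$ every vertex has degree $1$, which the formula also gives. I do not expect any step to be a genuine obstacle.
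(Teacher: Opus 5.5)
Your proposal is correct and follows the paper's proof: a copy of $\overline{K_m}$ is determined by its $m$-element vertex set, so every vertex has $\overline{K_m}$-degree $\binom{n-1}{m-1}$, and no graph on at least two vertices can be $\overline{K_m}$-irregular. Your remark that subgraphs are not required to be induced matches how the paper counts copies elsewhere, for example in the proof of Theorem~\ref{t2.1}.
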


\begin{proof}
	Let $G$ be an arbitrary non-trivial graph of order $g = |G|$. Since $\overline{K_m}$ is an edgeless graph, the $\overline{K_m}$-degree of any vertex in $G$ is exactly $\binom{g-1}{m-1}$. Thus, all vertices in $G$ have identical $\overline{K_m}$-degrees, meaning that $G$ cannot be $\overline{K_m}$-irregular.
\end{proof}

\begin{theorem}\label{t2.1}
	Let $F = H + \overline{K_m}$ for some integer $m \ge 1$, where $H$ is a nonempty graph with no isolated vertices. A non-trivial graph $G$ is $F$-irregular if and only if $|G| > |F|$ and $G$ is $H$-irregular.
\end{theorem}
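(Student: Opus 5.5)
The plan is to express the $F$-degree of a vertex through $H$-counts alone, and then compare. Throughout, $G$ is non-trivial of order $g$, $h=|H|$, and $|F| = h+m$. A copy of $F = H+\overline{K_m}$ in $G$ is determined by a copy $H'$ of $H$ in $G$ together with an $m$-subset $S$ of $V(G)\setminus V(H')$. This correspondence is a bijection between $F$-subgraphs and such pairs $(H',S)$, because $H$ has no isolated vertices. Indeed, in any subgraph $F'\cong F$ the isolated vertices of $F'$ are exactly the $m$ vertices of $\overline{K_m}$, and the rest spans the copy of $H$. So $H'$ and $S$ are recovered uniquely from $F'$, and no automorphism issues arise since we count subgraphs. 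This is the key structural step, and it is the only place the hypothesis on $H$ is used.

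Next, let $N$ be the total number of $H$-subgraphs of $G$ and let $d_H(v)$ be the $H$-degree of $v$. A vertex $v$ lies in $F'=(H',S)$ either because $v\in V(H')$ or because $v\in S$. In the first case there are $d_H(v)$ choices of $H'$ and $\binom{g-h}{m}$ choices of $S$. In the second case there are $N-d_H(v)$ choices of $H'$ avoiding $v$ and $\binom{g-h-1}{m-1}$ choices of the remaining vertices of $S$. Hence
\[
d_F(v)= d_H(v)\left(\binom{g-h}{m}-\binom{g-h-1}{m-1}\right)+N\binom{g-h-1}{m-1}
= d_H(v)\binom{g-h-1}{m}+N\binom{g-h-1}{m-1},
\]
by Pascal's rule. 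Thus $d_F(v)$ is an affine function of $d_H(v)$ with slope $c=\binom{g-h-1}{m}$ and an additive term independent of $v$.

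Finally, I would conclude by cases on $g$. If $g>|F|=h+m$, then $g-h-1\ge m$, so $c>0$. The map is then injective, and $G$ is $F$-irregular exactly when it is $H$-irregular. If $g\le |F|$, then $c=0$ and all $F$-degrees coincide. Since $g\ge 2$, $G$ is then not $F$-irregular. This covers the case $g<|F|$, where $F$-degrees are all zero, and also $g=|F|$, where every $F$-copy spans all of $V(G)$. This gives both directions.

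The main obstacle is justifying the bijection carefully, in particular that a subgraph isomorphic to $F$ decomposes uniquely. This needs the fact that the isolated vertices of the subgraph (not of $G$) are exactly the images of $\overline{K_m}$. The rest is the binomial identity, which should be checked including the edge cases where binomial coefficients vanish.
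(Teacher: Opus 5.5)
Your proposal is correct and follows the paper's proof. The split by whether $v$ lies in the copy of $H$ or among the $m$ isolated vertices gives the same formula $d_F(v)=d_H(v)\binom{g-h-1}{m}+N\binom{g-h-1}{m-1}$, and the conclusion rests, as in the paper, on $\binom{g-h-1}{m}>0$ when $g>|F|$. You also make explicit the unique decomposition of a copy of $F$, which the paper uses tacitly, and your handling of $g=|F|$ (every $F$-degree equals the number of spanning copies of $F$) is more accurate than the paper's claim that the $F$-degrees are ``trivially zero'' for $2\le g\le |F|$.
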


\begin{proof}
	Let $g = |G|$ and $h = |H|$. If $2 \le g \le |F|$, the $F$-degree of every vertex in $G$ is trivially zero, so $G$ cannot be $F$-irregular. Since the condition $|G| > |F|$ is necessary, we assume for the remainder of the proof that $g \ge |F| + 1 = h + m + 1$.
	
	For any vertex $v \in V(G)$, let $x$ and $y$ denote its $F$-degree and $H$-degree in $G$, respectively, and let $k$ be the total number of copies of $H$ in $G$. Each copy of $F$ containing $v$ either includes $v$ as an isolated vertex in the copy of $\overline{K_m}$ or as a vertex in the copy of $H$. The former case requires a copy of $H$ that avoids $v$, supplemented by $m-1$ isolated vertices chosen from the remaining $g-h-1$ vertices of $G$, which yields $(k-y)\binom{g-h-1}{m-1}$ configurations. The latter case requires a copy of $H$ containing $v$, supplemented by $m$ isolated vertices chosen from the remaining $g-h$ vertices of $G$, yielding $y\binom{g-h}{m}$ configurations. It follows that
	\[
	\begin{aligned}
		x &= (k-y) \binom{g-h-1}{m-1} + y \binom{g-h}{m} \\
		&= y \binom{g-h-1}{m} + k \binom{g-h-1}{m-1},
	\end{aligned}
	\]
	where the coefficient $\binom{g-h-1}{m}$ is strictly positive since $g \ge h + m + 1$. Consequently, all vertices in $G$ have pairwise distinct $F$-degrees if and only if they have pairwise distinct $H$-degrees, which completes the proof.
\end{proof}

\begin{lemma}\label{l2.2}
	If $F = K_2 + \overline{K_m}$ for some integer $m \ge 1$, then there exists no non-trivial $F$-irregular graph.
\end{lemma}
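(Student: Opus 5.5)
By Theorem~\ref{t2.1} with $H = K_2$, which is nonempty and has no isolated vertices, a non-trivial graph $G$ is $F$-irregular if and only if $|G| > |F|$ and $G$ is $K_2$-irregular. It therefore suffices to show that no non-trivial graph is $K_2$-irregular.

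The key observation is that the $K_2$-degree of a vertex $v$ in $G$ is exactly the number of edges of $G$ containing $v$, that is, its ordinary degree $\deg_G(v)$. So a $K_2$-irregular graph is precisely a graph with pairwise distinct vertex degrees. I would then invoke the classical pigeonhole argument. In a graph of order $g \ge 2$, every degree lies in $\{0, 1, \dots, g-1\}$. The values $0$ and $g-1$ cannot both occur: a vertex of degree $g-1$ is adjacent to every other vertex, so no vertex has degree $0$. Hence at most $g-1$ distinct values are available for $g$ vertices, and two vertices must share a degree.

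Consequently no non-trivial graph is $K_2$-irregular. By Theorem~\ref{t2.1}, no non-trivial graph is $F$-irregular for $F = K_2 + \overline{K_m}$. Every step is routine; the only point needing care is checking that $K_2$ satisfies the hypotheses of Theorem~\ref{t2.1}, which it clearly does.
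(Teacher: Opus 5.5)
Your proposal is correct and follows the paper's proof exactly: apply Theorem~\ref{t2.1} with $H = K_2$, note that the $K_2$-degree is the ordinary degree, and conclude because no non-trivial graph has pairwise distinct vertex degrees. The only difference is that you spell out the standard pigeonhole argument ($0$ and $g-1$ cannot both occur as degrees), which the paper cites as a known fact.
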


\begin{proof}
	The statement follows from Theorem~\ref{t2.1} and the fact that no non-trivial graph has pairwise distinct vertex degrees, so none is $K_2$-irregular.
\end{proof}

\section{The case of disconnected graphs $F$ with $\delta(F)=1$}

Throughout this section, let $F$ be a disconnected graph with $\delta(F) = 1$, and let $n = |F|$, which implies $n \ge 4$.

\subsection{The Graph $X_{2l}$}

For any fixed integer $l > n$, we define the graph $X_{2l}$ as follows:
\[
\begin{aligned}
	V(X_{2l}) &= \{1, 2, \dots, 2l\}, \\[1mm]
	E(X_{2l}) &= \bigl\{ (i,j) \bigm\vert 1 \le i < j \le l \bigr\} \cup \bigl\{ (1, 2l) \bigr\} \  \\[1mm]
	&\quad \cup \bigl\{ (i,j) \bigm\vert l+1 \le i \le 2l-1, \, i-l+1 \le j \le l \bigr\}.
\end{aligned}
\]

Structurally, the vertices $1, 2, \dots, 2l-1$ are partitioned into two levels 
(see Fig.~\ref{fig1}): the upper level forms a clique on $1, 2, \dots, l$, whereas the lower one  
comprises an independent set on $l+1, l+2, \dots, 2l-1$. Each vertex in the lower level 
is adjacent to all upper-level vertices located directly above or to the right of it. Finally, 
the vertex $2l$ is a pendant vertex adjacent to $1$.

\begin{figure}[ht]
	\centering
	\includegraphics[width=10cm]{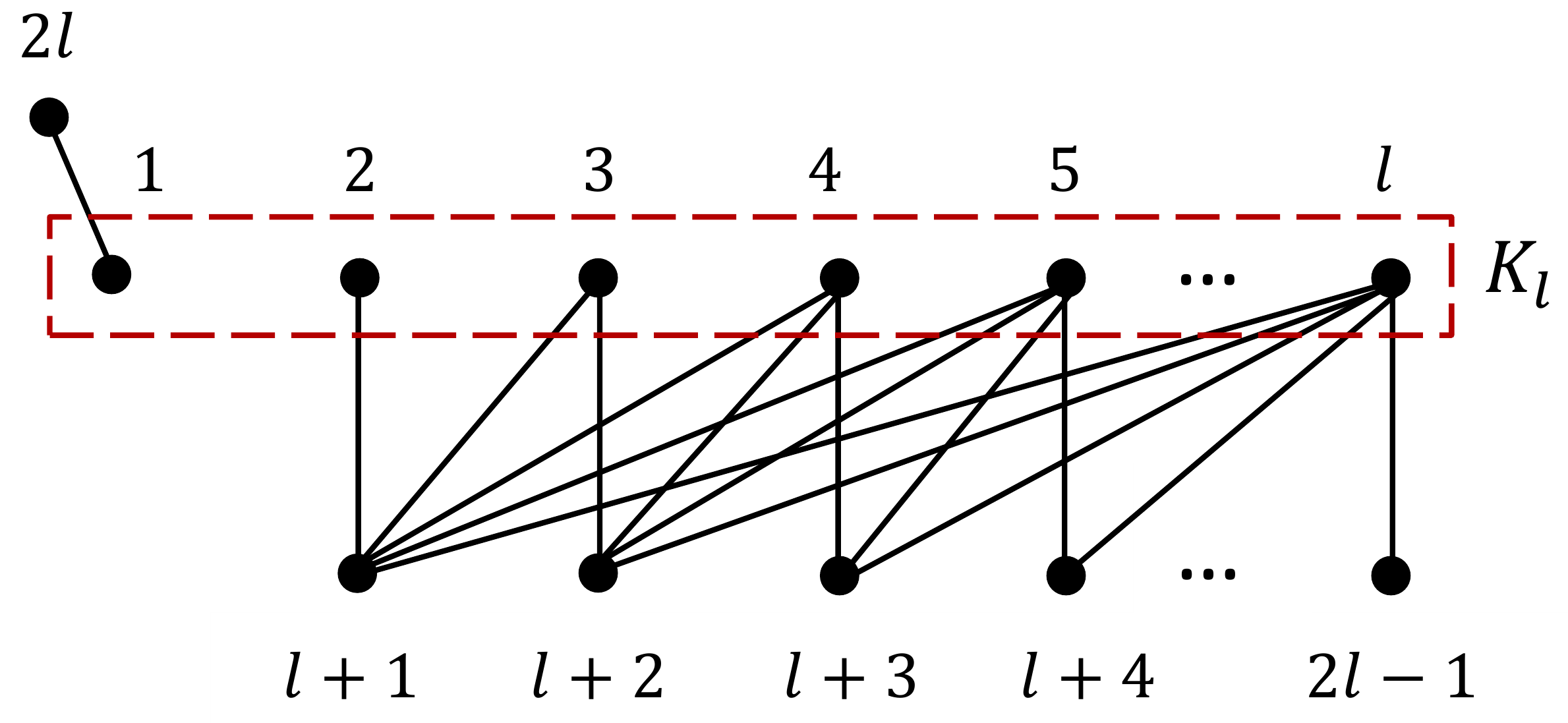}
	\caption{\small The graph $X_{2l}$.}
	\label{fig1}
\end{figure}

For each vertex $i \in V(X_{2l})$, let $x_i$ denote its $F$-degree in $X_{2l}$.
\medskip

In the following lemmas, we establish the relations between the $F$-degrees 
of vertices in $X_{2l}$, sometimes under additional constraints on the parameter~$l$. 
The proofs of Lemmas~\ref{l3.1}, \ref{l3.2}, \ref{l3.5}, and \ref{l3.9} are omitted, 
as they are essentially identical to those of Lemmas 2.1, 2.2, 2.4, and 2.5 from~\cite{6}, respectively.

\begin{lemma}\label{l3.1}
	Let $i \in \{2, 3, \dots, l-1\}$. Then $x_{i+1} > x_i$.
\end{lemma}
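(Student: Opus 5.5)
We will prove $x_{i+1} > x_i$ with an injection from copies of $F$ containing $i$ into copies containing $i+1$, and then exhibit one copy that the injection misses.

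\textbf{Neighbourhoods.} For $2 \le i \le l$, vertex $i$ is adjacent to all of $\{1,\dots,l\}\setminus\{i\}$. Its lower-level neighbours are those $j \in \{l+1,\dots,2l-1\}$ with $j-l+1 \le i$, i.e.\ $j \le i+l-1$. Hence for $2 \le i \le l-1$,
\[
N(i)\setminus\{i+1\} \subseteq N(i+1)\setminus\{i\},
\]
and $N(i+1)$ contains exactly one extra lower-level vertex, namely $i+l$. The vertex $2l$ is adjacent to neither $i$ nor $i+1$.

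\textbf{The injection.} Let $\tau$ be the transposition of the labels $i$ and $i+1$. For a copy $S$ of $F$ (a subgraph of $X_{2l}$) containing $i$:
\begin{itemize}
\item if $S$ also contains $i+1$, we map $S$ to itself;
\item otherwise we map $S$ to $\tau(S)$, obtained by replacing $i$ with $i+1$ in every edge.
\end{itemize}
In the second case every edge $(i,w)$ of $S$ has $w \ne i+1$ and $w \in N(i) \subseteq N(i+1)$, so $(i+1,w)$ is an edge. Thus $\tau(S)$ is a subgraph of $X_{2l}$ isomorphic to $F$ that contains $i+1$ and avoids $i$.

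The map is injective. The first case lands in copies containing both vertices, the second in copies containing $i+1$ but not $i$, and on each part the map is injective. Therefore $x_{i+1} \ge x_i$.

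\textbf{Strictness.} We need a copy of $F$ containing $i+1$, not containing $i$, and using the edge $(i+1,i+l)$; such a copy is not of the form $\tau(S)$. Since $\delta(F)=1$, $F$ has a vertex $u$ of degree $1$ with neighbour $u'$. We embed $F$ so that $u \mapsto i+l$ and $u' \mapsto i+1$, and the remaining $n-2$ vertices map injectively into the clique $\{1,\dots,l\}\setminus\{i,i+1\}$. That clique has $l-2 \ge n-1$ vertices because $l>n$.

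This is a valid embedding:
\begin{itemize}
\item every edge of $F$ not incident to $u$ joins two vertices mapped into the clique $\{1,\dots,l\}\setminus\{i\}$, so it is present;
\item the only edge at $u$ is $uu'$, and it maps to $(i+l,i+1)$, which is an edge.
\end{itemize}
The image avoids $i$, so it is not in the range of the identity part. It uses the edge $(i+1,i+l)$, while $(i,i+l)$ is not an edge, so it is not $\tau$ of any copy. Hence $x_{i+1} \ge x_i + 1$.

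\textbf{Main obstacle.} The delicate point is treating copies as subgraphs rather than vertex sets, so that $\tau$ is applied edgewise; then injectivity is immediate from $\tau$ being an involution. The only place where $F$'s disconnectedness could matter is the embedding in the strictness step, and it does not: the clique absorbs all of $F$ except the pendant edge. The hypothesis $i \le l-1$ is used only to guarantee that $i+l \le 2l-1$ is a lower-level vertex.
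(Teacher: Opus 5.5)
Your proof is correct and takes essentially the paper's approach. The paper omits this proof and cites Lemma~2.1 of [6], but the argument is the one used in Lemma~\ref{l3.3}: deleting the edge $(i+1,i+l)$ makes $i$ and $i+1$ symmetric, so $x_{i+1}-x_i$ equals the number of copies of $F$ that use $(i+1,i+l)$ and avoid $i$. That set is exactly the complement of the image of your transposition injection, and your pendant-vertex embedding into the clique $\{1,\dots,l\}\setminus\{i,i+1\}$ shows it is nonempty.
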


\begin{lemma}\label{l3.2}
	Let $i \in \{l+1, l+2, \dots, 2l-2\}$. Then $x_i > x_{i+1}$.
\end{lemma}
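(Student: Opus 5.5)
We prove Lemma~\ref{l3.2} by comparing the neighbourhoods of two consecutive lower-level vertices and injecting copies of $F$ through $i+1$ into copies through $i$. For $i \in \{l+1,\dots,2l-2\}$ the definition gives
\[
N(i) = \{i-l+1,\dots,l\}, \qquad N(i+1) = \{i-l+2,\dots,l\} \subsetneq N(i),
\]
and in particular $i$ and $i+1$ are non-adjacent. Let $\sigma$ be the transposition of $V(X_{2l})$ swapping $i$ and $i+1$ and fixing every other vertex. For a copy $S$ of $F$ containing $i+1$, define $\phi(S)$ as follows.

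\begin{enumerate}
	\item If $i \notin V(S)$, set $\phi(S)=\sigma(S)$. Each edge $(i+1,w)$ of $S$ has $w \in N(i+1) \subseteq N(i)$, so $(i,w)$ is an edge of $X_{2l}$. No other edges of $S$ are affected, so $\phi(S)$ is a copy of $F$ containing $i$ but not $i+1$.
	\item If $i \in V(S)$, set $\phi(S)=S$.
\end{enumerate}

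The map $\phi$ is injective. On the first class it is the injective map $\sigma$, and its images are exactly the copies containing $i$ and not $i+1$, which are disjoint from the fixed copies of the second class. Hence $x_i \ge x_{i+1}$.

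For strictness we exhibit a copy of $F$ that contains $i$, avoids $i+1$, and uses the edge $(i, i-l+1)$. Such a copy is not in the image of $\phi$: it is not of type 2 since it avoids $i+1$, and it is not $\sigma(S)$ for any $S$, because $\sigma^{-1}$ would turn the edge into $(i+1,i-l+1)$, which is a non-edge since $i-l+1 \notin N(i+1)$.

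To build it, we use the assumptions that $F$ is disconnected with $\delta(F)=1$ and $n=|F|\ge 4$. Pick a vertex $u$ of degree $1$ in $F$ with neighbour $u'$. Map $u \mapsto i$ and $u' \mapsto i-l+1$. Then embed all remaining vertices of $F$ into the clique $\{1,\dots,l\}\setminus\{i-l+1\}$. This clique has $l-1 \ge n$ vertices since $l>n$, so every remaining edge is realised, as any graph embeds as a subgraph of a clique. The only edge of $F$ at $u$ is $uu'$, and $u'$ lands in the clique, so all edges of $F$ are present in $X_{2l}$. This copy contains $i$, avoids $i+1$ and all other lower vertices, and uses the edge $(i,i-l+1)$.

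The whole argument never uses disconnectedness beyond the existence of a pendant vertex, so I expect no real obstacle. The only point needing care is the injectivity/disjointness bookkeeping, namely that images of the first class avoid $i+1$ while fixed copies contain it.
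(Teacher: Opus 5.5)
Your proof is correct and follows essentially the route the paper relies on: it omits the proof and defers to Lemma~2.2 of~\cite{6}, and the intended argument is the one used in Lemma~\ref{l3.3}. Because $N(i)=N(i+1)\cup\{i-l+1\}$, the swap $i\leftrightarrow i+1$ matches every copy through $i+1$ except that the copies using the edge $(i,i-l+1)$ and avoiding $i+1$ are left unmatched, so $x_i-x_{i+1}$ counts exactly those copies, and your embedding (pendant vertex at $i$, its neighbour at $i-l+1$, the rest of $F$ in the upper clique) shows this count is positive.
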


\begin{lemma}\label{l3.3}
	The inequalities $x_{l+1} < x_1$ and $x_{l+1} < x_2$ hold.
\end{lemma}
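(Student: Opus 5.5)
The plan is to compare the numbers of copies of $F$ that \emph{avoid} each vertex, rather than the numbers that contain it. Let $k$ be the total number of copies of $F$ in $X_{2l}$. Then $x_v = k - a_v$, where $a_v$ is the number of copies of $F$ in $X_{2l}-v$. So $x_{l+1}<x_1$ is equivalent to $a_1 < a_{l+1}$, and $x_{l+1}<x_2$ is equivalent to $a_2<a_{l+1}$. Each inequality will follow from an injection that fails to be surjective.

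First we record the relevant neighbourhoods, read off from the definition of $X_{2l}$:
\[
N(l+1)=\{2,\dots,l\},\qquad N(1)=\{2,\dots,l,2l\},\qquad N(2)=\{1,3,\dots,l,l+1\}.
\]
Here we use that a lower vertex $i$ is adjacent to $2$ only when $i-l+1\le 2$, that is, only for $i=l+1$.

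For the first inequality, let $\tau$ be the transposition of the vertices $1$ and $l+1$. Take any copy $S$ of $F$ in $X_{2l}-1$. Every edge of $S$ at $l+1$ goes to $\{2,\dots,l\}$, and $\tau$ sends such an edge to an edge from $1$ into $\{2,\dots,l\}$, which exists in $X_{2l}$. All other edges of $S$ are fixed by $\tau$. Hence $\tau(S)$ is a copy of $F$ in $X_{2l}-(l+1)$, and $S\mapsto\tau(S)$ is injective, giving $a_1\le a_{l+1}$.

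For strictness, we exhibit a copy of $F$ in $X_{2l}-(l+1)$ that contains the edge $(1,2l)$. Such a copy is not in the image, because in every image copy all edges at $1$ go into $\{2,\dots,l\}$. To build it, use $\delta(F)=1$ to pick a pendant vertex $p$ of $F$ with neighbour $q$. Map $p\mapsto 2l$ and $q\mapsto 1$, and map the remaining $n-2$ vertices injectively into $\{2,\dots,l\}$; this is possible since $l>n$. Every edge of $F$ other than $pq$ then lands in the clique on $\{1,\dots,l\}$, so this gives a genuine copy of $F$.

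For the second inequality we argue in the same way with the transposition $\sigma$ of $2$ and $l+1$. In $X_{2l}-2$ the vertex $l+1$ has neighbours only in $\{3,\dots,l\}$, and these are also neighbours of $2$. So $\sigma$ injects copies of $F$ in $X_{2l}-2$ into copies in $X_{2l}-(l+1)$, and in every image copy the vertex $2$ has all its edges going into $\{3,\dots,l\}$. For strictness, take any copy of $F$ inside the clique on $\{1,\dots,l\}$ that uses the edge $(1,2)$; it exists because $F$ has an edge and $l>n$. This copy avoids $l+1$ and is not in the image, so $a_2<a_{l+1}$.

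The only delicate point is checking that the transpositions really preserve edges. This relies on $1$ and $l+1$ being non-adjacent, and on the fact that edges $(2,l+1)$ never appear in copies avoiding $2$. The strictness constructions are routine once the pendant vertex of $F$ is used to reach the vertex $2l$.
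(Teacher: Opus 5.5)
Your proof is correct and is essentially the paper's argument in complementary form. The paper notes that $1$ and $l+1$ (resp.\ $2$ and $l+1$) are interchangeable in $X_{2l}$ once the edge $(1,2l)$ (resp.\ $(1,2)$) is removed, so $x_1-x_{l+1}$ (resp.\ $x_2-x_{l+1}$) equals the number of copies of $F$ that use that edge and avoid $l+1$, which is exactly the set of copies missed by your transposition injection, and it then exhibits the same witnesses you do (a copy on $\{1,\dots,n-1\}\cup\{2l\}$ with $2l$ pendant, and a copy inside the clique on $\{1,\dots,n\}$ using $(1,2)$).
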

\begin{proof}
	In the graph $X_{2l} \setminus \{(1, 2l)\}$, the vertices $1$ and $l+1$ have the same neighborhood $\{2, 3, \dots, l\}$ and are therefore symmetric. This yields
	\[
	x_1 - x_{l+1} = |\mathcal{A}_1|, \quad \text{where } \mathcal{A}_1 = \{ G \subseteq X_{2l} \mid G \cong F, \, (1, 2l) \in E(G), \, l+1 \notin V(G) \}.
	\]
	Since $l > n$, the clique on vertices $1, 2, \dots, n-1$ excludes $l+1$. Thus, there exists a subgraph in $\mathcal{A}_1$ with the vertex set $\{1, 2, \dots, n-1\} \cup \{2l\}$, showing that $\mathcal{A}_1 \neq \varnothing$ and $x_{l+1} < x_1$.
	
	Next, in the graph $X_{2l} \setminus \{(1, 2)\}$, the vertices $2$ and $l+1$ have the same closed neighborhood $\{2, 3, \dots, l+1\}$ and are therefore symmetric. This yields
	\[
	x_2 - x_{l+1} = |\mathcal{A}_2|, \quad \text{where } \mathcal{A}_2 = \{ G \subseteq X_{2l} \mid G \cong F, \, (1, 2) \in E(G), \, l+1 \notin V(G) \}.
	\]
	Since $l > n$, the clique on $n$ vertices $1, 2, \dots, n$ excludes $l+1$ 
	and contains a subgraph from $\mathcal{A}_2$. Hence, $\mathcal{A}_2 \neq \varnothing$, which implies $x_{l+1} < x_2$.
\end{proof}

\begin{lemma}\label{l3.4}
	Suppose that $F$ is a 1-regular graph. Then $x_1 > x_2$.
\end{lemma}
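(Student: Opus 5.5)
Since $F$ is disconnected and $1$-regular, $F\cong kK_2$ with $k=n/2\ge 2$. A subgraph of $X_{2l}$ isomorphic to $F$ is exactly a $k$-matching, so $x_v$ is the number of $k$-matchings of $X_{2l}$ that cover $v$. Let $N$ be the total number of $k$-matchings of $X_{2l}$, and let $a_v$ be the number of those that avoid $v$, i.e.\ the number of $k$-matchings of $X_{2l}-v$. Then $x_v=N-a_v$, and the inequality $x_1>x_2$ is equivalent to $a_2>a_1$. The plan is to compare the graphs $X_{2l}-1$ and $X_{2l}-2$ directly.

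First I will record the neighbourhoods from the definition of $X_{2l}$. In $X_{2l}-1$, vertex $2l$ is isolated, vertex $2$ is adjacent to $\{3,\dots,l\}\cup\{l+1\}$, and vertex $l+1$ is adjacent to $\{2,\dots,l\}$. In $X_{2l}-2$, vertex $1$ is adjacent to $\{3,\dots,l\}\cup\{2l\}$, and vertex $l+1$ is adjacent to $\{3,\dots,l\}$. Every other adjacency is the same in both graphs.

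Next, rename vertex $2$ of $X_{2l}-1$ as $1$. The resulting graph and $X_{2l}-2$ then have the same vertex set $V(X_{2l})\setminus\{2\}$. Their edge sets agree except for one edge each: the renamed $X_{2l}-1$ has $(1,l+1)$, while $X_{2l}-2$ has $(1,2l)$ instead. Hence the $k$-matchings avoiding the differing edge correspond bijectively. It remains to compare two counts:
\begin{itemize}
\item the $k$-matchings of $X_{2l}-1$ using $(2,l+1)$, which are in bijection with the $(k-1)$-matchings of $X_{2l}-\{1,2,l+1,2l\}$;
\item the $k$-matchings of $X_{2l}-2$ using $(1,2l)$, which are in bijection with the $(k-1)$-matchings of $X_{2l}-\{1,2,2l\}$.
\end{itemize}
The first graph is an induced subgraph of the second, so every $(k-1)$-matching of the first is also one of the second. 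This gives $a_2\ge a_1$.

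The only real step is strictness. I need a $(k-1)$-matching of $X_{2l}-\{1,2,2l\}$ that covers $l+1$. Take the edge $(l+1,3)$ together with $k-2$ disjoint edges from the clique on $\{4,\dots,l\}$. This is possible because $l>n=2k$ gives $l-3\ge 2(k-2)$. This matching is not contained in $X_{2l}-\{1,2,l+1,2l\}$, so $a_2>a_1$ and therefore $x_1>x_2$.
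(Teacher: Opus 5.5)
Your proof is correct and follows essentially the same route as the paper. Your relabeling $2\mapsto 1$ is the paper's observation that $1$ and $2$ are symmetric in $X_{2l}\setminus\{(1,2l),(2,l+1)\}$. Your two residual counts are exactly the paper's sets $\mathcal{M}$ and $\mathcal{K}$, and your inclusion of $(k-1)$-matchings is the paper's injection that swaps the component $(2,l+1)$ for $(1,2l)$. Strictness comes in both arguments from a copy through $(1,2l)$ that covers $l+1$. The only difference is cosmetic: you count $k$-matchings avoiding a vertex rather than copies containing it, so the copies avoiding both $1$ and $2$ cancel instead of those containing both.
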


\begin{proof}
	In the graph $X_{2l} \setminus \{(1, 2l), (2, l+1)\}$, the vertices $1$ and $2$ have the same closed neighborhood $\{1, 2, \dots, l\}$ and are therefore symmetric. This yields
	\[
	\begin{aligned}
		x_1 - x_2 &= |\mathcal{K}| - |\mathcal{M}|, \quad \text{where} \\
		\mathcal{K} &= \{ G \subseteq X_{2l} \mid G \cong F, \, (1, 2l) \in E(G), \, 2 \notin V(G) \}, \\
		\mathcal{M} &= \{ H \subseteq X_{2l} \mid H \cong F, \, (2, l+1) \in E(H), \, 1 \notin V(H) \}.
	\end{aligned}
	\]
	To prove $x_1 > x_2$, it suffices to show that $|\mathcal{K}| > |\mathcal{M}|$.
	
First, any subgraph $H \in \mathcal{M}$ excludes the vertex $2l$; otherwise, $H$ 
would also contain its neighbor, vertex $1$, contradicting the condition $1 \notin V(H)$. 
Furthermore, since $H \cong F$  and $F$ is a \mbox{1-regular}  graph, every connected component of $F$ is a $K_2$-component, meaning the vertices $2$ and $l+1$ are adjacent only 
to each other in $H$.
We define an injection from $\mathcal{M}$ to $\mathcal{K}$ that maps each subgraph 
$H \in \mathcal{M}$ to a subgraph in $\mathcal{K}$ by the following rule:
	\[
	\begin{aligned}
		2 &\to 2l, \quad l+1 \to 1, \quad i \to i \quad \text{for } i \in V(H) \setminus \{2, l+1\}, \\
		(2, l+1) &\to (2l, 1), \quad (i, j) \to (i, j) \quad \text{for } (i, j) \in E(H) \setminus \{(2, l+1)\}.
	\end{aligned}
	\]
	This mapping is not surjective because the image of any subgraph from $\mathcal{M}$ excludes the vertex $l+1$, whereas $\mathcal{K}$ contains at least one subgraph including $l+1$. Indeed, since $l > n$, such a subgraph can be formed by choosing two of its $K_2$-components on the vertex sets $\{1, 2l\}$ and $\{l, l+1\}$, while the remaining components are chosen from the clique induced by the vertices $3, 4, \dots, n-2$. Thus, $|\mathcal{K}| > |\mathcal{M}|$.
\end{proof}
		
\begin{lemma}\label{l3.5}
	If $\binom{l-2}{n-2} > n! \binom{2l-4}{n-3}$ and $\Delta(F) > 1$, then $x_2 > x_1$.
\end{lemma}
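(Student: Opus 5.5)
Following the proof of Lemma~\ref{l3.4}, I would use the symmetry of vertices $1$ and $2$ in $X_{2l} \setminus \{(1,2l),(2,l+1)\}$, where both have closed neighborhood $\{1,\dots,l\}$. This gives
\[
x_2 - x_1 = |\mathcal{M}| - |\mathcal{K}|,
\]
with $\mathcal{K}$ and $\mathcal{M}$ defined exactly as in that proof. It therefore suffices to bound $|\mathcal{K}|$ from above and $|\mathcal{M}|$ from below, and then invoke the hypothesis $\binom{l-2}{n-2} > n!\binom{2l-4}{n-3}$.

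\emph{Upper bound on $|\mathcal{K}|$.} Every $G \in \mathcal{K}$ contains the vertices $1$ and $2l$ and omits $2$. Its remaining $n-2$ vertices are chosen from the $2l-3$ vertices of $V(X_{2l})\setminus\{1,2,2l\}$. A copy of $F$ on a fixed vertex set is determined by a bijection from $V(F)$, so there are at most $n!$ copies per vertex set. This gives $|\mathcal{K}| \le n!\binom{2l-3}{n-2}$. That does not quite match the hypothesis, so I would sharpen it as follows. Since $2l$ is a pendant vertex of $X_{2l}$ whose only neighbor is $1$, the edge $(1,2l)$ is forced into $G$. It is cleaner to fix one more vertex: because $\Delta(F)>1$ and $F$ has no isolated vertices (after applying Theorem~\ref{t2.1} if needed), I can argue that $1$ must have another neighbor in $G$, or else count more carefully. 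Alternatively, I would simply note that $\binom{2l-3}{n-2}$ and $\binom{2l-4}{n-3}$ differ by a factor $\frac{2l-3}{n-2}$, and adjust the bound. My main approach is the following. Take any vertex $u \ne 1$ of $G$ other than $2l$, for instance the one of smallest label. This gives at most $2l-3$ choices for $u$ and at most $\binom{2l-4}{n-3}$ choices for the rest, with some overcounting. I expect fitting this precisely to $n!\binom{2l-4}{n-3}$ to be the main technical obstacle. The intended argument in~\cite{6} is presumably a crude count of this kind, possibly using $x$ on $\{1,2l\}$ plus $n-2$ others with one further vertex forced.

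\emph{Lower bound on $|\mathcal{M}|$.} I would exploit $\Delta(F)>1$ and $\delta(F)=1$. Choose a vertex $a$ of $F$ of degree $1$ adjacent to a vertex $b$, and map $a \mapsto l+1$ and $b \mapsto 2$. The vertex $l+1$ is adjacent to $2,\dots,l$, and $2$ is adjacent to all of $3,\dots,l$. Every other vertex of $F$ can then be embedded into the clique $\{3,\dots,l\}$, which has $l-2$ vertices. For each $(n-2)$-subset of $\{3,\dots,l\}$ this yields at least one copy of $F$ containing $(2,l+1)$ and avoiding $1$. Distinct subsets give distinct subgraphs, so $|\mathcal{M}| \ge \binom{l-2}{n-2}$.

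Combining the two bounds gives $|\mathcal{M}| \ge \binom{l-2}{n-2} > n!\binom{2l-4}{n-3} \ge |\mathcal{K}|$, and hence $x_2 > x_1$. This reduction is where the hypothesis $\Delta(F)>1$ matters: it guarantees that $F$ is not a matching, in contrast to Lemma~\ref{l3.4}. The delicate step remains the upper bound on $|\mathcal{K}|$ with exactly $n-3$ free vertices. There I would look for one more vertex forced into $G$, such as a neighbor of $1$ in $G$ required by the structure of $F$, and absorb the ordering choices into the factor $n!$.
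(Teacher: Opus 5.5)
Your identity $x_2-x_1=|\mathcal{M}|-|\mathcal{K}|$ is correct, and so is your bound $|\mathcal{M}|\ge\binom{l-2}{n-2}$. The step you flag as the main obstacle, however, is false, not just delicate: under the lemma's hypothesis one always has $|\mathcal{K}|>n!\binom{2l-4}{n-3}$. Your own construction transfers to $\mathcal{K}$. Send a pendant vertex of $F$ to $2l$, its neighbor to $1$, and the other $n-2$ vertices onto any $(n-2)$-subset of the clique $\{3,\dots,l\}$, all of whose vertices are adjacent to $1$. This gives $|\mathcal{K}|\ge\binom{l-2}{n-2}>n!\binom{2l-4}{n-3}$.

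Both $|\mathcal{K}|$ and $|\mathcal{M}|$ have order $l^{n-2}$, so no absolute bound of order $l^{n-3}$ can separate them, and fixing one more vertex of $G$ does not help. Nor must $1$ have a second neighbor in $G$: if $F$ has a $K_2$-component, $(1,2l)$ may be that component. A warning sign is that your argument never really uses $\Delta(F)>1$. Putting a pendant vertex at $l+1$ is also possible when $F$ is $1$-regular, and there Lemma~\ref{l3.4} gives the opposite inequality $x_1>x_2$.

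The missing idea is to compare $\mathcal{K}$ and $\mathcal{M}$ by an injection, as in Lemmas~\ref{l3.6} and~\ref{l3.8}, and to use the crude counts only on the leftover parts. The paper omits this proof and defers to its reference, but the hypothesis is tailored exactly to this structure. Split $\mathcal{K}$ into $\mathcal{K}_1=\{G\in\mathcal{K}\mid l+1\notin V(G)\}$ and $\mathcal{K}_2=\{G\in\mathcal{K}\mid l+1\in V(G)\}$.

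On $\mathcal{K}_1$, the relabeling $1\to 2$, $2l\to l+1$ (all other vertices fixed) is an injection into $\mathcal{M}$. It is well defined because the neighbors of $1$ in $G$ other than $2l$ lie in $\{3,\dots,l\}$, all adjacent to $2$, and $(1,2l)$ goes to $(2,l+1)$. In every image, $l+1$ is a pendant vertex. Every $G\in\mathcal{K}_2$ contains $1$, $l+1$, $2l$ and avoids $2$, so $|\mathcal{K}_2|\le n!\binom{2l-4}{n-3}$; this is where that term of the hypothesis comes from.

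On the other side, for each $(n-2)$-subset $V\subseteq\{3,\dots,l\}$, the set $V\cup\{2,l+1\}$ induces $K_n$. Because $\Delta(F)>1$, you can place a vertex of degree at least $2$ at $l+1$ and one of its neighbors at $2$. The resulting $\binom{l-2}{n-2}$ members of $\mathcal{M}$ are not images of $\mathcal{K}_1$, which is where $\Delta(F)>1$ genuinely enters. Therefore $|\mathcal{M}|\ge|\mathcal{K}_1|+\binom{l-2}{n-2}>|\mathcal{K}_1|+n!\binom{2l-4}{n-3}\ge|\mathcal{K}_1|+|\mathcal{K}_2|=|\mathcal{K}|$, which gives $x_2>x_1$.
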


\begin{lemma}\label{l3.6}
	If $\binom{l-2}{n-2} > n! \binom{2l-4}{n-3}$ and $F$ is a 1-regular graph, then $x_3 > x_1$.
\end{lemma}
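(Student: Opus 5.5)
Since $F$ is $1$-regular and disconnected, $F\cong tK_2$ with $t=n/2\ge 2$. Every copy of $F$ in $X_{2l}$ is therefore a matching with $t$ edges. I would follow the symmetry argument of Lemmas~\ref{l3.3} and~\ref{l3.4}.

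\textbf{Symmetry reduction.} In $X_{2l}$ the vertex $3$ is adjacent to the clique $\{1,\dots,l\}$ and to the lower vertices $l+1$ and $l+2$. The vertex $1$ is adjacent to the clique and to $2l$. Hence in $X_{2l}\setminus\{(1,2l),(3,l+1),(3,l+2)\}$ the vertices $1$ and $3$ have the same closed neighborhood, and they are symmetric. This yields
\[
x_3-x_1=|\mathcal{M}|-|\mathcal{K}|.
\]
Here $\mathcal{M}$ is the set of copies of $F$ that contain $(3,l+1)$ or $(3,l+2)$ and avoid $1$. Likewise $\mathcal{K}$ is the set of copies that contain $(1,2l)$ and avoid $3$. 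Since $F$ is a matching, each copy in $\mathcal{K}$ is the edge $(1,2l)$ together with a $(t-1)$-matching in $X_{2l}-\{1,3,2l\}$. Similarly, each copy in $\mathcal{M}$ is $(3,l+1)$ or $(3,l+2)$ together with a $(t-1)$-matching avoiding $1$, $3$ and the chosen lower vertex.

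\textbf{Splitting $\mathcal{K}$.} I would write $\mathcal{K}=\mathcal{K}_0\cup\mathcal{K}_1$. Here $\mathcal{K}_0$ consists of the copies whose remaining $t-1$ edges lie in the clique on $\{2,4,5,\dots,l\}$, and $\mathcal{K}_1$ consists of those using at least one lower vertex. Replacing $(1,2l)$ by $(3,l+1)$ is an injection from $\mathcal{K}_0$ into the part of $\mathcal{M}$ using $(3,l+1)$ whose other edges lie in that clique. Both families have size $\binom{l-2}{n-2}(n-3)!!$. The copies of $\mathcal{M}$ using $(3,l+2)$ with the remaining edges inside the clique $\{2,4,\dots,l\}$ are disjoint from this image. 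There are $\binom{l-2}{n-2}(n-3)!!$ of them.

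\textbf{Bounding $\mathcal{K}_1$ (the main obstacle).} It therefore suffices to show $|\mathcal{K}_1|<\binom{l-2}{n-2}(n-3)!!$. Since $(n-3)!!\ge 1$, the hypothesis $\binom{l-2}{n-2}>n!\binom{2l-4}{n-3}$ reduces this to proving $|\mathcal{K}_1|\le n!\binom{2l-4}{n-3}$.

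A copy in $\mathcal{K}_1$ is determined by an $(n-2)$-vertex set $S\subseteq V(X_{2l})\setminus\{1,3,2l\}$ that contains a lower vertex, together with a perfect matching on $S$. The number of matchings is at most $(n-3)!!\le n!$. For the vertex sets, I would fix a canonical lower vertex of $S$, for instance the largest, and exploit the fact that lower vertices are independent, so it is matched to an upper vertex. I would then encode $S$ by the remaining $n-3$ vertices chosen from a ground set of size at most $2l-4$, absorbing the remaining choice into the factor $n!$ exactly as in the counting for Lemma~\ref{l3.5} from~\cite{6}. I expect this encoding to be the delicate step: the ground set must be shown to have only $2l-4$ elements and the canonical choice must leave no extra factor of $l$. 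If it does not work directly, I would instead charge each copy in $\mathcal{K}_1$ to the pair consisting of its vertex set minus one upper vertex and an ordering of $S$, which is bounded by $n!\binom{2l-4}{n-3}$.

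Combining these steps gives $|\mathcal{M}|\ge|\mathcal{K}_0|+\binom{l-2}{n-2}(n-3)!!>|\mathcal{K}_0|+|\mathcal{K}_1|=|\mathcal{K}|$, and hence $x_3>x_1$.
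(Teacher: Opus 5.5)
\textbf{There is a genuine gap.} Your symmetry reduction is correct. So are the counts for $\mathcal{K}_0$, its image, and the $(3,l+2)$-copies. But the step you flagged as delicate is not merely delicate: the bound $|\mathcal{K}_1|\le n!\binom{2l-4}{n-3}$ is false whenever the lemma's hypothesis holds.

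Take any $(n-2)$-set $V=\{v_1<v_2<\dots<v_{n-2}\}\subseteq\{2,4,5,\dots,l\}$. Form the copy consisting of the edges $(1,2l)$ and $(l+v_1-1,v_2)$, where $l+v_1-1$ is adjacent to $v_1,\dots,l$, together with a fixed perfect matching of $\{v_3,\dots,v_{n-2}\}$. This copy lies in $\mathcal{K}_1$, and $V$ can be recovered from it. Hence
\[
|\mathcal{K}_1|\ge\binom{l-2}{n-2}>n!\binom{2l-4}{n-3}.
\]
Even your weaker target $|\mathcal{K}_1|<\binom{l-2}{n-2}(n-3)!!$ fails. For $n=4$ one computes $|\mathcal{K}_1|=\binom{l-2}{2}+2l-5$. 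In general, the copies using exactly one lower vertex already number asymptotically $\frac{n-2}{2}\binom{l-2}{n-2}(n-3)!!$.

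The culprit is the canonical lower vertex. It ranges over $l-1$ values, so it contributes a factor of order $l$ that no constant such as $n!$ can absorb. Your fallback charging has the same defect, since the deleted upper vertex is one of roughly $l$ choices. Structurally, by restricting $\mathcal{K}_0$ to clique remainders you discarded exactly the copies of $\mathcal{M}$ whose other edges use lower vertices, and those are what balance $\mathcal{K}_1$.

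\textbf{How to fix it.} Split $\mathcal{K}$ by whether the single vertex $l+1$ is used, not by whether any lower vertex is used.
\begin{enumerate}
\item If $l+1\notin V(G)$, replacing $(1,2l)$ by $(3,l+1)$ is valid no matter which lower vertices the other $t-1$ edges use, because those edges avoid $1,3,2l,l+1$. This gives an injection into the copies of $\mathcal{M}$ through $(3,l+1)$.
\item Only the copies containing $l+1$ remain. Each contains the three fixed vertices $1,l+1,2l$ and avoids $3$, so there are at most $n!\binom{2l-4}{n-3}$ of them.
\item Your $\binom{l-2}{n-2}(n-3)!!\ge\binom{l-2}{n-2}$ copies through $(3,l+2)$ lie outside the image of the injection. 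By the hypothesis they outnumber the leftover copies from step 2.
\end{enumerate}
This is exactly the paper's proof.
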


\begin{proof}
	In the graph $X_{2l} \setminus \{(1, 2l), (3, l+1), (3, l+2)\}$, the vertices $1$ and $3$ have the same closed neighborhood $\{1, 2, \dots, l\}$ and are therefore symmetric. This yields
\[
\begin{aligned}
	x_3 - x_1 &= |\mathcal{L}| - |\mathcal{N}|, \quad \text{where} \\
	\mathcal{L} &= \{ H \subseteq X_{2l} \mid H \cong F, \, \{(3, l+1), (3, l+2)\} \cap E(H) \neq \varnothing, \, 1 \notin V(H) \}, \\
	\mathcal{N} &= \{ G \subseteq X_{2l} \mid G \cong F, \, (1, 2l) \in E(G), \, 3 \notin V(G) \}.
\end{aligned}
\]
To prove $x_3 > x_1$, it suffices to show that $|\mathcal{L}| > |\mathcal{N}|$. To this end, we partition $\mathcal{L}$ and $\mathcal{N}$ into disjoint subsets: 
\[
\begin{aligned}
	\mathcal{L}_1 &= \{ H \in \mathcal{L} \mid (3, l+2) \notin E(H) \}, \quad &\mathcal{L}_2 &= \{ H \in \mathcal{L} \mid (3, l+2) \in E(H) \}, \\
	\mathcal{N}_1 &= \{ G \in \mathcal{N} \mid l+1 \notin V(G) \}, \quad &\mathcal{N}_2 &= \{ G \in \mathcal{N} \mid l+1 \in V(G) \}.
\end{aligned}
\]

Since $G \cong F$ for any subgraph $G \in \mathcal{N}_1$ and $F$ is a 1-regular graph, the vertices $1$ and $2l$ are adjacent only to each other in $G$. 
We define an injection from $\mathcal{N}_1$ to $\mathcal{L}_1$ that maps each subgraph 
$G \in \mathcal{N}_1$ to a subgraph in $\mathcal{L}_1$ given by:
\[
\begin{aligned}
	1 &\to l+1, \quad 2l \to 3, \quad i \to i \quad \text{for } i \in V(G) \setminus \{1, 2l\}, \\
	(1, 2l) &\to (l+1, 3), \quad (i, j) \to (i, j) \quad \text{for } (i, j) \in E(G) \setminus \{(1, 2l)\}.
\end{aligned}
\]
This implies $|\mathcal{L}_1| \ge |\mathcal{N}_1|$.

To upper-bound $|\mathcal{N}_2|$, we observe that every subgraph in $\mathcal{N}_2$ contains the vertices $1, l+1$, and $2l$, while excluding $3$. The remaining $n-3$ vertices can be selected from $V(X_{2l}) \setminus \{1, 3, l+1, 2l\}$ in exactly $\binom{2l-4}{n-3}$ ways. Since there are at most $n!$ subgraphs isomorphic to $F$ on any fixed set of $n$ vertices, we obtain
\[	
|\mathcal{N}_2| \le n! \binom{2l-4}{n-3}.
\]

To lower-bound $|\mathcal{L}_2|$, we note that since $l > n$, any subset~$V$ of $n-2$ upper-level vertices distinct from $1$ and $3$ induces a clique. Thus, there exists a subgraph in $\mathcal{L}_2$ with vertex set $V \cup \{3, l+2\}$ composed of a $K_2$-component on the vertex set $\{3, l+2\}$ and the remaining components chosen from the clique induced by $V$. Since there are $\binom{l-2}{n-2}$ choices for $V$, it follows that
\[
|\mathcal{L}_2| \ge \binom{l-2}{n-2}.
\]

If $\binom{l-2}{n-2} > n! \binom{2l-4}{n-3}$, then $|\mathcal{L}_2| > |\mathcal{N}_2|$. Combined with $|\mathcal{L}_1| \ge |\mathcal{N}_1|$, this immediately yields $|\mathcal{L}| > |\mathcal{N}|$.
\end{proof}

\begin{lemma}\label{l3.7}
	If every pendant vertex of $F$ lies in a $K_2$-component, then $x_{2l} > x_{2l-1}$.
\end{lemma}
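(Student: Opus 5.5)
The plan is to use the same symmetry-and-injection technique as in Lemmas~\ref{l3.3}, \ref{l3.4} and~\ref{l3.6}. Both $2l$ and $2l-1$ are pendant vertices of $X_{2l}$: the only neighbor of $2l$ is $1$, and the only neighbor of $2l-1$ is $l$, since $2l-1$ is adjacent exactly to the $j$ with $l \le j \le l$.

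\textbf{Step 1: reduce to a difference of two families.} Compare the copies of $F$ containing $2l$ with those containing $2l-1$.
\begin{itemize}
\item Copies containing both vertices are counted on both sides.
\item Copies containing exactly one of them, in which that vertex is isolated in the copy, correspond bijectively under the transposition $2l \leftrightarrow 2l-1$. This works because neither vertex uses an edge there.
\end{itemize}
Hence
\[
x_{2l}-x_{2l-1} = |\mathcal{K}|-|\mathcal{M}|,
\]
where $\mathcal{K}$ is the set of copies $G \cong F$ with $(1,2l)\in E(G)$ and $2l-1\notin V(G)$, and $\mathcal{M}$ is the set of copies $H\cong F$ with $(l,2l-1)\in E(H)$ and $2l\notin V(H)$. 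It then suffices to show $|\mathcal{K}|>|\mathcal{M}|$.

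\textbf{Step 2: an injection $\mathcal{M}\to\mathcal{K}$.} Take $H\in\mathcal{M}$. Since $2l-1$ has degree $1$ in $H$, it corresponds to a pendant vertex of $F$. By hypothesis it lies in a $K_2$-component, so $\{l,2l-1\}$ is a $K_2$-component of $H$ and $l$ has no other neighbors in $H$. Define the map as follows.
\begin{itemize}
\item Always send $2l-1\mapsto 2l$.
\item If $1\notin V(H)$, send $l\mapsto 1$ and fix all other vertices.
\item If $1\in V(H)$, swap $1\leftrightarrow l$ and fix the rest.
\end{itemize}
The edge $(l,2l-1)$ becomes $(1,2l)$. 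In the second case, every $H$-edge at $1$ goes to an upper-level vertex $j\in\{2,\dots,l-1\}$; it cannot go to $2l$, which is excluded, nor to $2l-1$. Therefore $(l,j)$ is an edge of $X_{2l}$, so the image is a subgraph of $X_{2l}$ isomorphic to $F$. The image contains $(1,2l)$ and avoids $2l-1$, so it lies in $\mathcal{K}$. The map is injective because it is given by a vertex relabeling that can be inverted once one knows whether $l$ lies in the image.

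\textbf{Step 3: non-surjectivity (the main point).} The image of $H$, with the component $\{1,2l\}$ removed, is obtained from a subgraph of $X_{2l}\setminus\{1\}$ or $X_{2l}$ by the swap $1\leftrightarrow l$. Consequently, an element of $\mathcal{K}$ that uses an edge $(l,l+1)$ cannot be an image, since its preimage would need the non-edge $(1,l+1)$.

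To build such a $G\in\mathcal{K}$, let $F'$ be $F$ minus one $K_2$-component. Such a component exists because $\delta(F)=1$ and pendant vertices lie in $K_2$-components. Then $F'$ has $n-2\ge 2$ vertices, no isolated vertices, and hence at least one edge. Choose the vertex set $\{l,l+1\}\cup S$ with $S\subseteq\{2,\dots,l-1\}$ and $|S|=n-4$, which is possible since $l>n$. This set induces a clique, because $l+1$ is adjacent to all of $2,\dots,l$. Embed $F'$ there so that some edge of $F'$ lands on $(l,l+1)$, and add the component $\{1,2l\}$. The result lies in $\mathcal{K}$ but not in the image, so $|\mathcal{K}|>|\mathcal{M}|$ and $x_{2l}>x_{2l-1}$.

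The step needing most care is Step 2's second case: checking that the swap $1\leftrightarrow l$ keeps all edges. This relies on $N(1)\setminus\{2l\}=\{2,\dots,l\}$ being contained in $N[l]$.
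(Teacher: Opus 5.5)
Your proof is correct and follows essentially the same route as the paper: the same injection ($2l-1\mapsto 2l$, $l\mapsto 1$, and $1\mapsto l$ when $1\in V(H)$), and the same non-surjectivity witness, namely a copy using the edge $(l,l+1)$ inside the clique $\{2,\dots,n-3,l,l+1\}$ together with the $K_2$-component on $\{1,2l\}$. Your Step~1 treatment of copies in which $2l$ or $2l-1$ is isolated is vacuous here because $\delta(F)=1$; this is why the paper works directly with the copies that contain exactly one of $2l$ and $2l-1$.
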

\begin{proof}
	We have $x_{2l} - x_{2l-1} = |\mathcal{B}| - |\mathcal{A}|$, where
	\[
	\begin{aligned}
		\mathcal{B} &= \{ G \subseteq X_{2l} \mid G \cong F, \, 2l \in V(G), \, 2l-1 \notin V(G) \}, \\
		\mathcal{A} &= \{ H \subseteq X_{2l} \mid H \cong F, \, 2l-1 \in V(H), \, 2l \notin V(H) \}.
	\end{aligned}
	\]
	To prove $x_{2l} > x_{2l-1}$, it suffices to show that $|\mathcal{B}| > |\mathcal{A}|$.
	
	Let $H \in \mathcal{A}$. In $H$, the vertex $2l-1$ is a pendant vertex adjacent to $l$. Since $H \cong F$ and every pendant vertex of $F$ belongs to a $K_2$-component, it follows that $l$ is adjacent only to $2l-1$ in $H$. 
	We define an injection from $\mathcal{A}$ to $\mathcal{B}$ that maps each subgraph $H \in \mathcal{A}$ to a subgraph in $\mathcal{B}$ given by:
	\[
	\begin{aligned}
		l &\to 1, \quad 2l-1 \to 2l, \quad 1 \to l \quad \text{if } 1 \in V(H), \\
		i &\to i \quad \text{for } i \in V(H) \setminus \{1, l, 2l-1\}, \\
		(l, 2l-1) &\to (1, 2l), \\
		(1, i) &\to (l, i) \quad \text{for } (1, i) \in E(H), \\
		(i, j) &\to (i, j) \quad \text{for } (i, j) \in E(H) \text{ with } i, j \notin \{1, l\}.
	\end{aligned}
	\]
	We show that this mapping is not surjective. First, observe that the image of any subgraph from $\mathcal{A}$ does not contain the edge $(l, l+1)$; otherwise, there exists a subgraph in $\mathcal{A}$ with the edge $(1, l+1)$, which is impossible. 	
	On the other hand, since $l > n$, $\mathcal{B}$ contains a subgraph with the vertex set $\{1, 2, \dots, n-3\} \cup \{l, l+1, 2l\}$ excluding $2l-1$ that, along with a $K_2$-component on the vertex set $\{1, 2l\}$, contains the edge $(l, l+1)$. The existence of such a subgraph follows from the fact that the vertices $2, 3, \dots, n-3, l$, and $l+1$ induce a clique in $X_{2l}$. Thus, $|\mathcal{B}| > |\mathcal{A}|$.
\end{proof}

\begin{lemma}\label{l3.8}
	If $\binom{l-2}{n-2} > n! \binom{2l-4}{n-3}$ and every pendant vertex of $F$ lies in a $K_2$-component, then $x_{2l-2} > x_{2l}$.
\end{lemma}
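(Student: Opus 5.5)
We compare the vertices $2l-2$ and $2l$ by finding a graph in which they become symmetric, as in the proofs of Lemmas~\ref{l3.4} and~\ref{l3.6}. In $X_{2l}$ the vertex $2l-2$ is adjacent exactly to $l-1$ and $l$, while $2l$ is adjacent only to $1$. In the graph $X_{2l}\setminus\{(1,2l),(l-1,2l-2),(l,2l-2)\}$ both vertices are isolated, so they are symmetric there. Hence
\[
x_{2l-2}-x_{2l}=|\mathcal{L}|-|\mathcal{N}|,
\]
where $\mathcal{L}$ consists of the copies $H\cong F$ in $X_{2l}$ with $\{(l-1,2l-2),(l,2l-2)\}\cap E(H)\neq\varnothing$ and $2l\notin V(H)$, and $\mathcal{N}$ consists of the copies $G\cong F$ with $(1,2l)\in E(G)$ and $2l-2\notin V(G)$. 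It suffices to show $|\mathcal{L}|>|\mathcal{N}|$.

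Following the proof of Lemma~\ref{l3.6}, we split both sets according to a parameter that makes an injection available on one part and a counting argument on the other. Put
\[
\mathcal{N}_1=\{G\in\mathcal{N}\mid l\notin V(G)\},\qquad \mathcal{N}_2=\mathcal{N}\setminus\mathcal{N}_1,
\]
\[
\mathcal{L}_1=\{H\in\mathcal{L}\mid (l,2l-2)\in E(H),\ (l-1,2l-2)\notin E(H),\ 1\notin V(H)\},\qquad \mathcal{L}_2=\mathcal{L}\setminus\mathcal{L}_1 .
\]
For $G\in\mathcal{N}_1$, the vertex $2l$ is pendant in $G$. By hypothesis every pendant vertex of $F$ lies in a $K_2$-component, so $\{1,2l\}$ is a $K_2$-component of $G$. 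Moreover $l\notin V(G)$, and $2l-2\notin V(G)$ by definition of $\mathcal{N}$. Map $1\mapsto l$, $2l\mapsto 2l-2$ and the edge $(1,2l)\mapsto(l,2l-2)$, fixing every other vertex and edge. The image is a copy of $F$ avoiding $2l$ and $1$ and using only the edge $(l,2l-2)$ at $2l-2$. The map is clearly injective, so $|\mathcal{L}_1|\ge|\mathcal{N}_1|$.

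Next we bound $\mathcal{N}_2$ from above. Every $G\in\mathcal{N}_2$ contains $1,l,2l$ and avoids $2l-2$. The remaining $n-3$ vertices are chosen from $2l-4$ candidates, and each vertex set carries at most $n!$ copies of $F$, so $|\mathcal{N}_2|\le n!\binom{2l-4}{n-3}$.

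For the lower bound on $\mathcal{L}_2$, we look for $\binom{l-2}{n-2}$ distinct members, one for each $(n-2)$-subset $V\subseteq\{1,\dots,l\}\setminus\{l-1,l\}$. On $V\cup\{l-1,2l-2\}$ take a $K_2$-component $\{l-1,2l-2\}$ and build the rest of $F$ inside the clique on $V$. This copy uses the edge $(l-1,2l-2)$ and avoids $2l$, so it lies in $\mathcal{L}\setminus\mathcal{L}_1=\mathcal{L}_2$. Such a $K_2$-component exists in $F$: since $\delta(F)=1$, $F$ has a pendant vertex, and by hypothesis it lies in a $K_2$-component. The subsets $V$ may contain $1$, which is harmless because membership in $\mathcal{L}_1$ would require $(l-1,2l-2)\notin E(H)$. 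Thus $|\mathcal{L}_2|\ge\binom{l-2}{n-2}>n!\binom{2l-4}{n-3}\ge|\mathcal{N}_2|$. Combined with $|\mathcal{L}_1|\ge|\mathcal{N}_1|$, this gives $|\mathcal{L}|>|\mathcal{N}|$.

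The main obstacle is choosing the partition so that the injection on the first parts really lands in $\mathcal{L}_1$. The image must avoid both $1$ and $2l$, and this is why $\mathcal{L}_1$ is defined with the extra conditions $1\notin V(H)$ and $(l-1,2l-2)\notin E(H)$, while the counting part $\mathcal{L}_2$ still receives all $\binom{l-2}{n-2}$ constructed copies. I would also recheck that $\mathcal{N}_1$ and $\mathcal{N}_2$ are indeed disjoint and cover $\mathcal{N}$ under this split.
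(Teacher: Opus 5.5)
Your proposal is correct and is essentially the paper's proof. Because $\delta(F)=1$, your $\mathcal{L}$ and $\mathcal{N}$ coincide with the paper's $\mathcal{W}$ and $\mathcal{Z}$, and the split of $\mathcal{N}$ by whether $l\in V(G)$, the swap of the $K_2$-component $\{1,2l\}$ for $\{l,2l-2\}$, and the two counting bounds are the same as in the paper; the only difference is your extra condition $1\notin V(H)$ in $\mathcal{L}_1$, which is unnecessary but harmless.
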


\begin{proof}
	We have $x_{2l-2} - x_{2l} = |\mathcal{W}| - |\mathcal{Z}|$, where
	\[
	\begin{aligned}
		\mathcal{W} &= \{ H \subseteq X_{2l} \mid H \cong F, \, 2l-2 \in V(H), \, 2l \notin V(H) \}, \\
		\mathcal{Z} &= \{ G \subseteq X_{2l} \mid G \cong F, \, 2l \in V(G), \, 2l-2 \notin V(G) \}.
	\end{aligned}
	\]
	To prove $x_{2l-2} > x_{2l}$, it suffices to show that $|\mathcal{W}| > |\mathcal{Z}|$. 
	For this purpose, we partition $\mathcal{W}$ and $\mathcal{Z}$ into disjoint subsets:
	\[
	\begin{aligned}
		\mathcal{W}_1 &= \{ H \in \mathcal{W} \mid (2l-2, l-1) \notin E(H) \}, \quad &\mathcal{W}_2 &= \{ H \in \mathcal{W} \mid (2l-2, l-1) \in E(H) \}, \\
		\mathcal{Z}_1 &= \{ G \in \mathcal{Z} \mid l \notin V(G) \}, \quad &\mathcal{Z}_2 &= \{ G \in \mathcal{Z} \mid l \in V(G) \}.
	\end{aligned}
	\]
	
	Since $G \cong F$ for any subgraph $G \in \mathcal{Z}_1$ and every pendant vertex of $F$ lies in  
\mbox{a $K_2$-component}, the vertex $1$ is adjacent only to $2l$ in $G$. We define an injection from $\mathcal{Z}_1$ to $\mathcal{W}_1$ that maps each subgraph $G \in \mathcal{Z}_1$ to a subgraph in $\mathcal{W}_1$ given by:
	\[
	\begin{aligned}
		1 &\to 2l-2, \quad 2l \to l, \quad i \to i \quad \text{for } i \in V(G) \setminus \{1, 2l\}, \\
		(1, 2l) &\to (2l-2, l), \quad (i, j) \to (i, j) \quad \text{for } (i, j) \in E(G) \setminus \{(1, 2l)\}.
	\end{aligned}
	\]
	This implies $|\mathcal{W}_1| \ge |\mathcal{Z}_1|$.
	
	To upper-bound $|\mathcal{Z}_2|$, we observe that every subgraph in $\mathcal{Z}_2$ contains the vertices $1, l, 2l$ and excludes $2l-2$. The remaining $n-3$ vertices can be selected directly from \mbox{$V(X_{2l}) \setminus \{1, l, 2l-2, 2l\}$} in exactly $\binom{2l-4}{n-3}$ ways. Since there are at most $n!$ subgraphs isomorphic to $F$ on any fixed set of $n$ vertices, we obtain
	\[		
	|\mathcal{Z}_2| \le n! \binom{2l-4}{n-3}.
	\]
	
	To lower-bound $|\mathcal{W}_2|$, we note that since $l > n$, any subset~$V$ of $n-2$ upper-level vertices distinct from $l-1$ and $l$ induces a clique. Thus, there exists a subgraph in $\mathcal{W}_2$ with vertex set $V \cup \{l-1, 2l-2\}$ composed of a $K_2$-component on the vertex set $\{l-1, 2l-2\}$ and the remaining components chosen from the clique induced by $V$. Since there are $\binom{l-2}{n-2}$ choices for $V$, it follows that
	\[
	|\mathcal{W}_2| \ge \binom{l-2}{n-2}.
	\]
	
	If $\binom{l-2}{n-2} > n! \binom{2l-4}{n-3}$, then  $|\mathcal{W}_2| > |\mathcal{Z}_2|$. Combined with $|\mathcal{W}_1| \ge |\mathcal{Z}_1|$, this  immediately yields $|\mathcal{W}| > |\mathcal{Z}|$.
\end{proof}

\begin{lemma}\label{l3.9}
	If $2 \mid l$, $\frac{l}{2} \binom{l/2-1}{n-3} > n! \binom{2l-4}{n-3}$, and $F$ contains a connected component with a pendant vertex other than $K_2$, then $x_{2l-1} > x_{2l}$.
\end{lemma}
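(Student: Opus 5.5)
We compare the two sets of copies of $F$ that separate the two vertices. As in Lemmas~\ref{l3.7} and~\ref{l3.8}, we have $x_{2l-1}-x_{2l}=|\mathcal{A}|-|\mathcal{B}|$, where
\[
\begin{aligned}
\mathcal{A} &= \{ H \subseteq X_{2l} \mid H \cong F,\, 2l-1 \in V(H),\, 2l \notin V(H) \},\\
\mathcal{B} &= \{ G \subseteq X_{2l} \mid G \cong F,\, 2l \in V(G),\, 2l-1 \notin V(G) \}.
\end{aligned}
\]
It therefore suffices to show $|\mathcal{A}|>|\mathcal{B}|$. The two pendant vertices $2l-1$ and $2l$ hang on $l$ and $1$ respectively, and these two vertices have the same neighbourhood inside the clique $\{1,\dots,l\}$. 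The only asymmetry is that $l$ is adjacent to every lower-level vertex, whereas $1$ has no lower-level neighbour except through $2l$. We exploit this asymmetry.

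First, split $\mathcal{B}=\mathcal{B}_1\cup\mathcal{B}_2$ according to whether $l\notin V(G)$ or $l\in V(G)$. For $G\in\mathcal{B}_1$, swapping the roles $1\leftrightarrow l$ and $2l\leftrightarrow 2l-1$ produces a subgraph of $X_{2l}$. Every edge at $1$ other than $(1,2l)$ goes into $\{2,\dots,l-1\}$, and $l$ is adjacent to all of these. The image contains $2l-1$ and avoids $2l$, so it lies in $\mathcal{A}$; we call this image set $\mathcal{A}_1$. Hence $|\mathcal{A}_1|\ge|\mathcal{B}_1|$. For $\mathcal{B}_2$, every member contains the three vertices $1,l,2l$ and avoids $2l-1$. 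Choosing the remaining $n-3$ vertices and at most $n!$ copies of $F$ on each vertex set gives
\[
|\mathcal{B}_2|\le n!\binom{2l-4}{n-3},
\]
exactly as in Lemma~\ref{l3.8}.

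The main step is to find many members of $\mathcal{A}$ outside the image $\mathcal{A}_1$, namely at least $\frac{l}{2}\binom{l/2-1}{n-3}$ of them. Members of $\mathcal{A}_1$ are images of subgraphs avoiding $l$; since $1$ has no lower-level neighbour in $X_{2l}\setminus\{(1,2l)\}$, no member of $\mathcal{A}_1$ uses an edge joining $l$ to a lower-level vertex other than $2l-1$. So we count copies of $F$ that contain the edge $(l,2l-1)$ and also use some edge $(l,j)$ with $l+1\le j\le 2l-2$; these avoid $1$ and $2l$.

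Let $C$ be a component of $F$ other than $K_2$ with a pendant vertex $p$ whose neighbour is $q$. Then $q$ has another neighbour $r$ in $C$. We map $p\mapsto 2l-1$ and $q\mapsto l$, and send $r$ to a lower-level vertex $j$ with $l+1\le j\le 3l/2$. This gives $l/2$ choices, using $2\mid l$. Such a $j$ is adjacent to all of $j-l+1,\dots,l$, so in particular to every upper vertex in $\{l/2+1,\dots,l-1\}$; this set has $l/2-1$ vertices and forms a clique with $l$. The remaining $n-3$ vertices of $F$ are mapped to any $(n-3)$-subset of $\{l/2+1,\dots,l-1\}$. We embed the rest of $C$ and the other components there, using that the chosen set together with $l$ and $j$ is nearly complete. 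The one subtlety is that $j$ is lower-level and not adjacent to the other lower vertices, but all remaining vertices are upper vertices adjacent to $j$, so any embedding of $F$ in which $p$ has the single neighbour $q$ works. This gives at least $\frac{l}{2}\binom{l/2-1}{n-3}$ distinct members of $\mathcal{A}\setminus\mathcal{A}_1$, distinguished by the pair (choice of $j$, vertex set).

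Combining the bounds, $|\mathcal{A}|\ge|\mathcal{A}_1|+\frac{l}{2}\binom{l/2-1}{n-3}>|\mathcal{B}_1|+n!\binom{2l-4}{n-3}\ge|\mathcal{B}|$ under the hypothesis. I expect the delicate point to be verifying that the swap injection really avoids the edges $(l,j)$. Members of $\mathcal{B}_1$ avoid $l$, so $1$'s image $l$ receives only edges to upper vertices or to $2l-1$, which settles it.
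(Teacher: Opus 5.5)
Your proof is correct and follows essentially the intended approach. The paper omits this proof, deferring to Lemma~2.5 of its source, but your scheme is the same template as Lemma~\ref{l3.8}: split $\mathcal{B}$ according to whether $l$ is used, inject $\mathcal{B}_1$ via the swap $1\leftrightarrow l$, $2l\leftrightarrow 2l-1$, and bound $\mathcal{B}_2$ by $n!\binom{2l-4}{n-3}$. Your extra copies, which use an edge $(l,j)$ with $l+1\le j\le 3l/2$ and an $(n-3)$-subset of $\{l/2+1,\dots,l-1\}$, give exactly the count $\frac{l}{2}\binom{l/2-1}{n-3}$ that appears in the lemma's hypothesis.
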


\subsection{Main result for $\delta(F)=1$}
We require the following two lemmas, 
which were established as Lemmas~2.6 and~2.7 in~\cite{6}, respectively. 

\begin{lemma}\label{l3.10}
	Let $k_1, k_2, m_1, m_2, x \in \mathbb{N}$, $b_1, b_2 \in \mathbb{Z}$, $y \in \mathbb{Z}^+$, $x > y$, $r_1 > 0$, and $r_2 \in \mathbb{R}$. 
	Then there exists a value $N_1$ such that for every positive integer  $l$ with  $l \ge N_1$ and   $\operatorname{LCM}(m_1, m_2) \mid l$, the following inequality holds:
	\[
	r_1 \binom{\frac{k_1}{m_1} l + b_1}{x} > r_2 \binom{\frac{k_2}{m_2} l + b_2}{y}.
	\]
\end{lemma}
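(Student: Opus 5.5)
The statement is Lemma~\ref{l3.10}. It is a purely asymptotic inequality, and I would prove it by comparing polynomial growth rates in $l$. Restrict to $l$ that are multiples of $L=\operatorname{LCM}(m_1,m_2)$. Then $a_1(l)=\frac{k_1}{m_1}l+b_1$ and $a_2(l)=\frac{k_2}{m_2}l+b_2$ are integers. They are affine in $l$ with positive slopes $c_1=k_1/m_1$ and $c_2=k_2/m_2$.

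For an integer $a\ge x$, the binomial coefficient $\binom{a}{x}=a(a-1)\cdots(a-x+1)/x!$ is a polynomial in $a$ of degree exactly $x$ with leading coefficient $1/x!$. Since $x\ge1$ and $c_1>0$, once $a_1(l)\ge x$ the left-hand side equals $r_1P_1(l)$, where $P_1$ is a polynomial of degree $x$ in $l$ with positive leading coefficient $r_1c_1^x/x!$. The right-hand side needs a case split.

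\textbf{Case $r_2\le 0$.} The right-hand side is $\le 0$, because binomial coefficients are nonnegative (or zero when $a_2(l)<y$). The left-hand side is strictly positive once $a_1(l)\ge x$, which gives $N_1$ directly.

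\textbf{Case $r_2>0$.} For large $l$ we have $a_2(l)\ge y$, and then the right-hand side is $r_2P_2(l)$ with $\deg P_2=y<x$. The case $y=0$ is included: there $\binom{a_2}{0}=1$ is constant. The difference $r_1P_1(l)-r_2P_2(l)$ is a polynomial of degree $x$ with positive leading coefficient, so it is eventually positive.

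For an explicit threshold, I would avoid limits and use elementary bounds instead. First, $\binom{a_1}{x}\ge\left(\frac{a_1-x+1}{x}\right)^x$ for $a_1\ge x$. Second, $\binom{a_2}{y}\le a_2^y$. Since $a_1-x+1\ge \frac{c_1}{2}l$ and $a_2\le 2c_2 l$ for large $l$, it suffices to have
\[
r_1\left(\frac{c_1}{2x}\right)^x l^{x}>r_2(2c_2)^y l^{y}.
\]
This holds once $l^{x-y}$ exceeds a fixed constant, and that is possible because $x-y\ge1$. Take $N_1$ to be the maximum of the resulting thresholds.

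The only delicate point is the bookkeeping at small $l$, where $a_1$ or $a_2$ can be negative or below $x$ and $y$ respectively. Binomial coefficients there could be zero or, under some conventions, behave unexpectedly. Choosing $N_1$ large enough that $a_1(l)\ge x$ and $a_2(l)\ge y$ (when $r_2>0$) removes this issue. The divisibility condition $\operatorname{LCM}(m_1,m_2)\mid l$ is used only to make both upper arguments integers, so the binomials are ordinary ones.
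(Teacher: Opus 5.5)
Your proof is correct. Note that the paper does not prove this lemma: it imports it as Lemma~2.6 of~\cite{6}, so there is no in-paper argument to compare your route against, and your leading-degree comparison is a valid self-contained replacement.

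Its main advantage over the bare citation is the explicit threshold. You combine $\binom{a_1}{x}\ge\bigl(\frac{a_1-x+1}{x}\bigr)^x$ and $\binom{a_2}{y}\le a_2^{\,y}$ with $a_1-x+1\ge\frac{c_1}{2}l$ and $a_2\le 2c_2 l$. This yields a computable $N_1$, and hence concrete admissible values of $l$ in Conditions~1--3, rather than mere existence.

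One small tidy-up concerns the case $r_2<0$. There you should also require $a_2(l)\ge 0$; the simplest fix is to impose $a_2(l)\ge y$ in both cases. The reason is that under the falling-factorial extension, $\binom{a}{y}$ can be negative for a negative integer $a$, and then $r_2\binom{a_2}{y}$ would be positive. Your closing paragraph already anticipates this, so it suffices to drop the qualifier ``(when $r_2>0$)''.
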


\begin{lemma}\label{l3.11}
	Let $k_1, k_2, m_1, m_2 \in \mathbb{N}$, $b_1, b_2 \in \mathbb{Z}$, $x \in \mathbb{Z}^+$, $p_1 > 0$, and $r_1, r_2 \in \mathbb{R}$. 
	Then there exists a value $N_2$ such that for every positive integer $l$ with $l \ge N_2$ and  $\operatorname{LCM}(m_1, m_2) \mid l$, the following inequality holds:
	\[
	(p_1 l + r_1) \binom{\frac{k_1}{m_1} l + b_1}{x} > r_2 \binom{\frac{k_2}{m_2} l + b_2}{x}.
	\]
\end{lemma}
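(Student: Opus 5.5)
Lemma~\ref{l3.11} is a purely asymptotic statement about polynomials in $l$, so I would prove it by comparing leading-order growth. Set $d=\operatorname{LCM}(m_1,m_2)$ and restrict to $l$ with $d\mid l$. Then $a_1=\frac{k_1}{m_1}l+b_1$ and $a_2=\frac{k_2}{m_2}l+b_2$ are integers, and both binomial coefficients are well-defined polynomial expressions once $l$ is large.

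Take $l$ large enough that $a_1\ge x$ and $a_2\ge x$. Then
\[
\binom{a_1}{x}=\frac{1}{x!}\prod_{j=0}^{x-1}(a_1-j),
\]
which is a polynomial in $l$ of degree exactly $x$ with leading coefficient $\frac{1}{x!}\bigl(\frac{k_1}{m_1}\bigr)^{x}>0$. The same holds for $a_2$.

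\textbf{Case $x=0$.} Both binomials equal $1$, and the inequality reads $p_1l+r_1>r_2$. This holds for all $l>(r_2-r_1)/p_1$.

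\textbf{Case $x\ge 1$.} The left side
\[
L(l)=(p_1l+r_1)\binom{a_1}{x}
\]
is a polynomial of degree $x+1$ with leading coefficient $p_1\bigl(\frac{k_1}{m_1}\bigr)^{x}/x!>0$. The right side $R(l)=r_2\binom{a_2}{x}$ is a polynomial of degree at most $x$. Hence $L(l)-R(l)$ is a polynomial of degree $x+1$ with positive leading coefficient, so it is positive for all sufficiently large real $l$.

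Let $N_2$ exceed both its largest real root and the thresholds needed for $a_1,a_2\ge x$. Then the inequality holds for every $l\ge N_2$ with $d\mid l$.

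To make $N_2$ explicit instead of appealing to roots, I would use elementary bounds. For $a_1\ge 2x$ we have $\binom{a_1}{x}\ge (a_1/2)^x/x!$, and always $\binom{a_2}{x}\le a_2^x/x!$. So it suffices that
\[
(p_1l+r_1)\Bigl(\tfrac{k_1}{2m_1}l+\tfrac{b_1}{2}\Bigr)^{x}>|r_2|\Bigl(\tfrac{k_2}{m_2}l+|b_2|\Bigr)^{x}.
\]
Dividing by $l^x$ turns this into an inequality of the form $(p_1l+r_1)\,c_1(l)>|r_2|\,c_2(l)$, where $c_1(l)$ and $c_2(l)$ converge to positive constants. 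This plainly holds for large $l$.

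There is no real obstacle here. The only care needed is with negative $b_i$ and small $l$, where the binomials may vanish or the factors may be negative. This is exactly why $N_2$ is chosen large enough that all factors $a_i-j$ and $p_1l+r_1$ are positive.
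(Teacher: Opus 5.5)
Your proof is correct. Once $l$ is large enough that $a_1,a_2\ge x$ and $p_1l+r_1>0$, the left side is a polynomial in $l$ of degree $x+1$ with positive leading coefficient, while the right side has degree at most $x$; this covers every case, including $x=0$ and any sign of $r_2$. The paper gives no proof of its own here, since it imports the statement as Lemma~2.7 of~\cite{6}, so your degree-comparison argument is a complete, self-contained justification of exactly what the paper uses.
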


\begin{theorem} \label{t3.1}
	For any disconnected graph $F$ with $\delta(F) = 1$, there exist infinitely many $F$-irregular graphs.
\end{theorem}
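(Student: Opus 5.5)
The plan is to show that $X_{2l}$ itself is $F$-irregular for all sufficiently large $l$ in a suitable residue class. Since the orders $2l$ are distinct, this gives infinitely many pairwise non-isomorphic $F$-irregular graphs. The argument splits into three cases according to the structure of $F$. In each case I would read off from the lemmas of Subsection 3.1 a single linear order of the $F$-degrees $x_1,\dots,x_{2l}$.

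The lower-level chain is the same in every case. By Lemma~\ref{l3.2} we have $x_{2l-1} < x_{2l-2} < \dots < x_{l+1}$. By Lemma~\ref{l3.3} and Lemma~\ref{l3.1} we have $x_{l+1} < x_2 < x_3 < \dots < x_l$ and also $x_{l+1} < x_1$. What remains is to place $x_1$ among the upper-level degrees and $x_{2l}$ among the lower-level ones.

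\emph{Case (a): $F$ is 1-regular.} Lemmas~\ref{l3.4} and~\ref{l3.6} give $x_2 < x_1 < x_3$. Every pendant vertex lies in a $K_2$-component, so Lemmas~\ref{l3.7} and~\ref{l3.8} give $x_{2l-1} < x_{2l} < x_{2l-2}$. Altogether this yields a strict total order.

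\emph{Case (b): $\Delta(F) > 1$ and every pendant vertex lies in a $K_2$-component.} Lemma~\ref{l3.5} gives $x_1 < x_2$, which combined with $x_{l+1} < x_1$ places $x_1$ between $x_{l+1}$ and $x_2$. Lemmas~\ref{l3.7} and~\ref{l3.8} place $x_{2l}$ between $x_{2l-1}$ and $x_{2l-2}$ as before.

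\emph{Case (c): some component with a pendant vertex is not $K_2$.} Then $\Delta(F)>1$, so Lemma~\ref{l3.5} again gives $x_{l+1} < x_1 < x_2$. Lemma~\ref{l3.9} gives $x_{2l} < x_{2l-1}$, so $x_{2l}$ is the unique minimum.

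In every case the degrees are pairwise distinct.

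The remaining work, which I expect to be the only real obstacle, is to confirm that the numerical hypotheses hold for all large $l$ (with $2\mid l$ in case (c)). Here $n=|F|$ is fixed and $n\ge 4$ since $F$ is disconnected with $\delta(F)=1$. The condition $\binom{l-2}{n-2} > n!\binom{2l-4}{n-3}$ follows from Lemma~\ref{l3.10} with $k_1=m_1=1$, $b_1=-2$, $x=n-2$, $k_2=2$, $m_2=1$, $b_2=-4$, $y=n-3$, $r_1=1$, $r_2=n!$, using $x>y\ge 1$.

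For $\frac{l}{2}\binom{l/2-1}{n-3} > n!\binom{2l-4}{n-3}$ the binomials have equal lower index. Here I would apply Lemma~\ref{l3.11} with $p_1=\tfrac12$, $r_1=0$, $k_1=1$, $m_1=2$, $b_1=-1$, $k_2=2$, $m_2=1$, $b_2=-4$, $x=n-3$, $r_2=n!$, so that $\operatorname{LCM}(m_1,m_2)=2$ divides $l$. This is the point where the linear factor $\frac{l}{2}$ beats the constant ratio of the binomials.

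Taking $l > \max\{n, N_1, N_2\}$ and $l$ even then makes every lemma used above applicable simultaneously. Hence $X_{2l}$ is $F$-irregular for infinitely many $l$.
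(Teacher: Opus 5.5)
Your proposal is correct and follows the paper's proof essentially step for step. It uses the same Condition~1 (even $l>n$ together with the two binomial inequalities, obtained from Lemmas~\ref{l3.10} and~\ref{l3.11}), the same three-way case split on $F$, and the same three linear orders of $x_1,\dots,x_{2l}$ assembled from Lemmas~\ref{l3.1}--\ref{l3.9}; your explicit parameter choices for the two auxiliary lemmas are valid and give a bit more detail than the paper does.
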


\begin{proof}
	Let $n = |F|$. Consider the following Condition 1:
	\[
	2 \mid l, \quad l > n, \quad \binom{l-2}{n-2} > n! \binom{2l-4}{n-3}, \quad \frac{l}{2} \binom{l/2-1}{n-3} > n! \binom{2l-4}{n-3}.
	\]
	We show that Condition 1 is satisfied by infinitely many values of $l$.
	
	Indeed, by Lemmas~\ref{l3.10} and \ref{l3.11}, there exist numbers $N_1$ and $N_2$ such that the inequality $\binom{l-2}{n-2} > n! \binom{2l-4}{n-3}$ holds for all positive integers $l \ge N_1$, and the inequality $\frac{l}{2} \binom{l/2-1}{n-3} > n! \binom{2l-4}{n-3}$ holds for all even positive integers $l \ge N_2$. Therefore, Condition 1 holds for any even integer $l \ge \max(N_1, N_2, n+1)$. Clearly, there are infinitely many such values of $l$.
	
	Now, consider the graph $X_{2l}$ for an arbitrary $l$ satisfying Condition 1. 
	
	If $F$ is a 1-regular graph, then Lemmas \ref{l3.1}--\ref{l3.4} and \ref{l3.6}--\ref{l3.8} imply
	\vspace{-1.2mm}
	\[
	x_l > x_{l-1} > \dots > x_3 > x_1 > x_2 > x_{l+1} > x_{l+2} > \dots > x_{2l-2} > x_{2l} > x_{2l-1}.\vspace{-1.2mm}
	\]
	
	If $\Delta(F) > 1$ and every pendant vertex of $F$ lies in a $K_2$-component, then according to \mbox{Lemmas~\ref{l3.1}--\ref{l3.3}}, \ref{l3.5}, \ref{l3.7}, and \ref{l3.8}, we have
	\vspace{-1.2mm}
	\[
	x_l > x_{l-1} > \dots > x_3 > x_2 > x_1 > x_{l+1} > x_{l+2} > \dots > x_{2l-2} > x_{2l} > x_{2l-1}.
	\vspace{-1.2mm}
	\]
	
	If $F$ contains a connected component with a pendant vertex other than $K_2$, then by \mbox{Lemmas~\ref{l3.1}--\ref{l3.3}}, \ref{l3.5}, and \ref{l3.9}, we obtain 
	\vspace{-1.2mm}
	\[
	x_l > x_{l-1} > \dots > x_3 > x_2 > x_1 > x_{l+1} > x_{l+2} > \dots > x_{2l-2} > x_{2l-1} > x_{2l}. \vspace{-1.2mm}
	\]
	
	Thus, in all cases, the graph $X_{2l}$ of order $2l$ is $F$-irregular. Since Condition 1 is satisfied by infinitely many values of $l$, this establishes the existence of an infinite family of pairwise non-isomorphic $F$-irregular graphs.
\end{proof}

\section{The case of disconnected graphs $F$ with $\delta(F) \ge 2$}

Throughout this section, let $F$ be a disconnected graph with $\delta(F) = t \ge 2$, and let $n = |F|$, which implies $n \ge 6$.

\subsection{The Graph $G_{2l-1}$}

Let $l$ be a positive integer such that $2n^2 \mid l$. We construct the graph $G_{2l-1}$ as follows:
\[
\begin{aligned}
	V(G_{2l-1}) &= \{1, 2, \dots, 2l-1\}, \quad 
	E(G_{2l-1}) &= E_0 \cup E_R,
\end{aligned}
\]
where $E_0 = \bigl\{ (i, j) \bigm\vert 1 \le i < j \le l \bigr\} \cup \bigl\{ (i, j) \bigm\vert l+1 \le i \le 2l-1, \, i-l+1 \le j \le l \bigr\}$ is the baseline edge set, and $E_R$ is the set of additional edges associated with the red vertices, specified below.

Consider a sequence of $\frac{t(t-1)}{2}$ vertices in $G_{2l-1}$ labeled as $1 + \frac{l}{n^2} \cdot j$, where $j \in \mathbb{Z}^+$ and $j \le \frac{t(t-1)}{2} - 1$. We designate these vertices as \emph{red}, and all other vertices in $G_{2l-1}$ as \emph{black}. By Lemma~\ref{l4.1}, the labels of all red vertices do not exceed $\frac{l}{2}$.
\smallskip

Formally, the additional edge set $E_R$ is constructed by connecting the first $t-1$ red vertices to vertex $2l-1$, the next $t-2$ red vertices to vertex $2l-2$, the subsequent $t-3$ red vertices to vertex $2l-3$, and so on. Finally, the red vertex labeled as $1 + \frac{l}{n^2} \cdot \left(\frac{t(t-1)}{2}-1\right)$ is connected to vertex $2l-t+1$.
\smallskip

 Note that $E_0 \cap E_R = \varnothing$. Indeed, under $E_0$, the vertices $2l-t+1, 2l-t+2, \dots, 2l-1$ are adjacent only to vertices from the set $\{l-t+2, l-t+3, \dots, l\}$. Since $l-t+2 > \frac{l}{2}$ holds for any $l$ satisfying $2n^2 \mid l$ (given $n > t \ge 2$), the set $\{l-t+2, l-t+3, \dots, l\}$ contains no red vertices.

Thus, the vertices $2l-t, 2l-t+1, \dots, 2l-1$ in $G_{2l-1}$ have degree $t$, while all other vertices have degrees strictly greater than $t$.

To visualize $G_{2l-1}$, we arrange its vertices in two levels (see Fig.~\ref{fig2}), where the upper level contains \mbox{$1, 2, \dots, l$} and the lower level contains \mbox{$l+1, l+2, \dots, 2l-1$}. In this representation, the baseline edge set $E_0$ consists of all edges between upper-level vertices, alongside edges connecting each lower-level vertex $i$ to the vertex directly above it and to all upper-level vertices located strictly to the right of $i$.

\begin{figure}[ht]
	\centering
	\includegraphics[width=10cm]{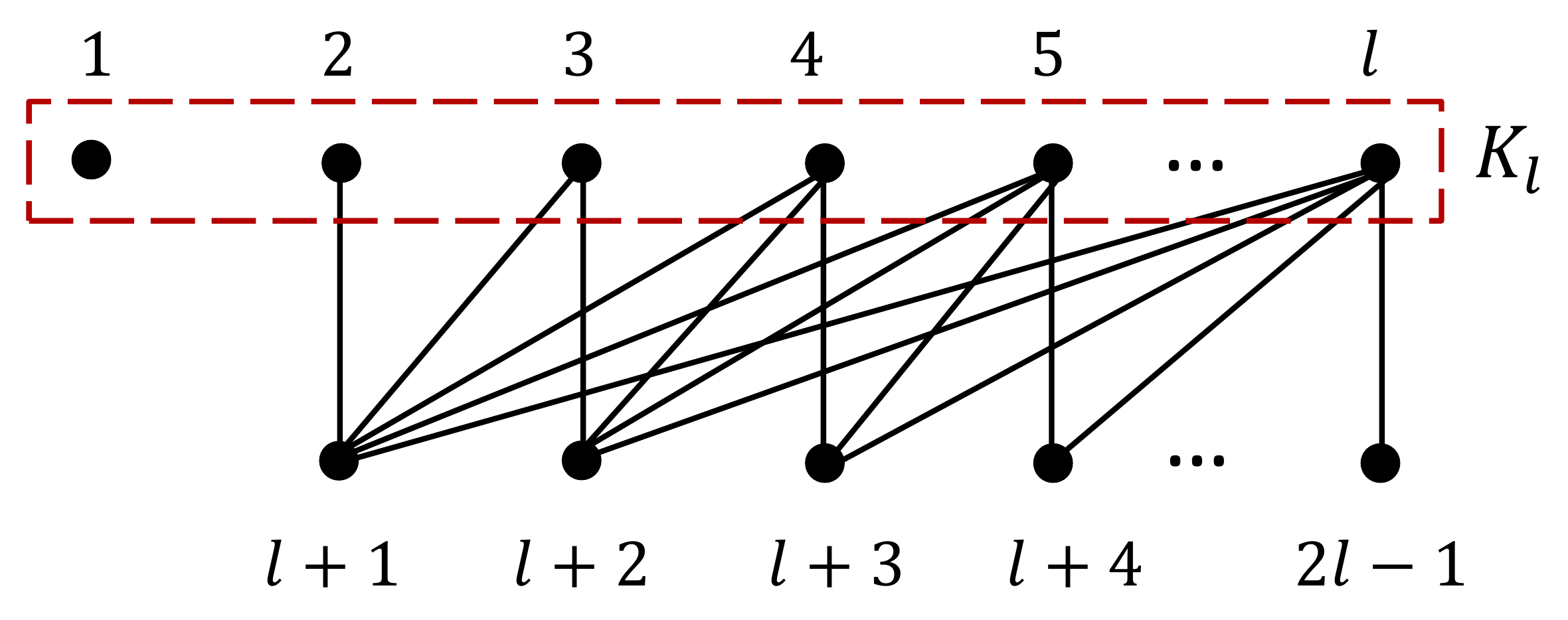}
	\caption{\small The baseline edge set $E_0$ of the graph $G_{2l-1}$.}
	\label{fig2}
\end{figure}

The additional edge set $E_R$ connecting the vertices \mbox{$2l-t+1, 2l-t+2, \dots, 2l-1$} to the upper-level red vertices is illustrated in Fig.~\ref{fig3}.

\begin{figure}[ht]
	\centering
	\includegraphics[width=14cm]{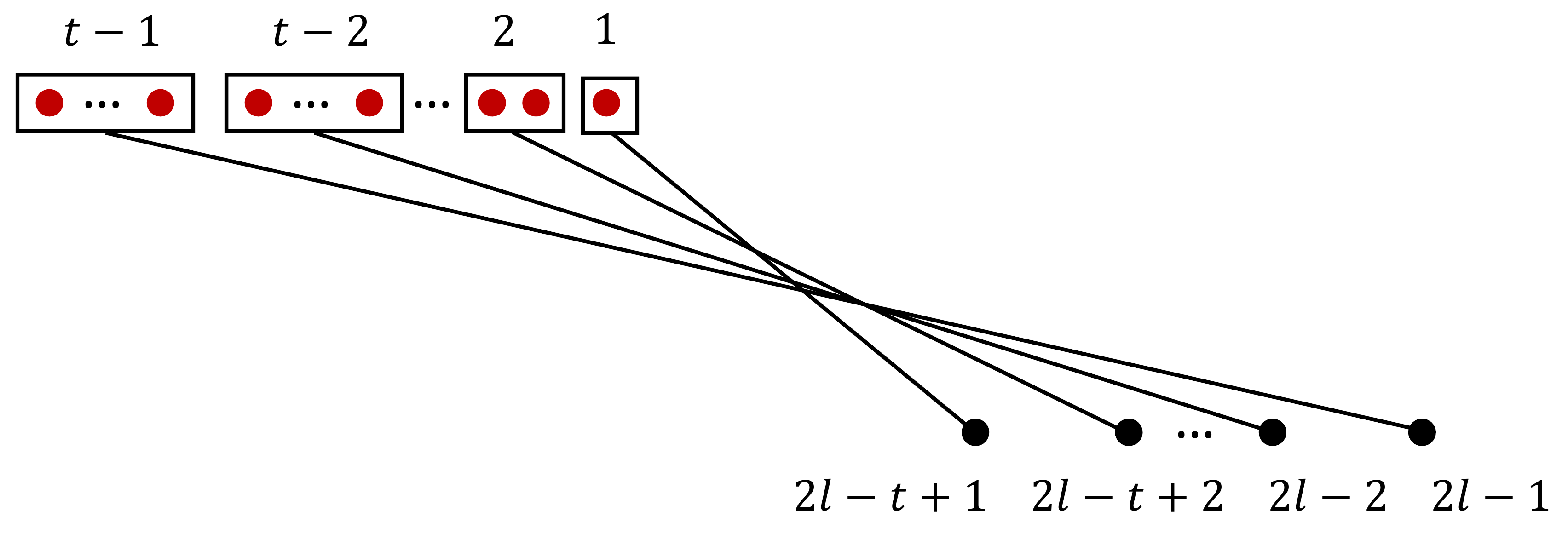}
	\caption{\small The additional edge set $E_R$ of the graph $G_{2l-1}$ associated with the red vertices.}
	\label{fig3}
\end{figure}

Next, for each vertex $i \in V(G_{2l-1})$, let $g_i$ denote its $F$-degree in $G_{2l-1}$.
\smallskip

In the following lemmas, we establish the order of vertex $F$-degrees in $G_{2l-1}$. 
The proofs of Lemmas~\ref{l4.1}--\ref{l4.4} and~\ref{l4.6} are omitted, as their 
arguments carry over with minimal changes from Lemmas~3.2--3.6 of~\cite{6}, respectively.

\begin{lemma}\label{l4.1}
	In the graph $G_{2l-1}$, the label of every red vertex does not exceed $\frac{l}{2}$.
\end{lemma}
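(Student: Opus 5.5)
The statement follows from a direct computation: the largest red label must be at most $\frac{l}{2}$. Since the labels $1 + \frac{l}{n^2} \cdot j$ increase with $j$, and $j$ runs over $0, 1, \dots, \frac{t(t-1)}{2}-1$, it suffices to check the largest one,
\[
1 + \frac{l}{n^2}\left(\frac{t(t-1)}{2} - 1\right) \le \frac{l}{2}.
\]
The hypothesis $2n^2 \mid l$ guarantees that $\frac{l}{n^2}$ is a positive even integer, so all labels are integers and lie in $V(G_{2l-1})$. Writing $q = \frac{l}{n^2}$, the target inequality becomes
\[
1 + q\left(\frac{t(t-1)}{2}-1\right) \le \frac{q n^2}{2}.
\]

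The key step is to bound $t$ in terms of $n$. Since $F$ is disconnected with $\delta(F) = t$, each component has minimum degree at least $t$, so it has at least $t+1$ vertices. With at least two components, we get $n \ge 2(t+1)$, i.e.\ $t \le \frac{n}{2} - 1$; a cruder bound $t \le n-1$ would also do. Then $\frac{t(t-1)}{2} - 1 < \frac{t^2}{2} \le \frac{n^2}{8}$, so
\[
q\left(\frac{t(t-1)}{2}-1\right) \le q\left(\frac{n^2}{8} - 1\right)
\]
in integer terms, since $\frac{t(t-1)}{2}-1 \le \frac{n^2}{8}-1$ holds whenever $t(t-1) \le \frac{n^2}{4}$.

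It therefore remains to verify
\[
1 + q\left(\frac{n^2}{8} - 1\right) \le \frac{q n^2}{2}.
\]
This is equivalent to $1 \le q\left(\frac{3n^2}{8} + 1\right)$, which holds trivially because $q \ge 2$. In fact, with the crude bound $t \le n-1$, the inequality $1 + q\left(\frac{(n-1)(n-2)}{2}-1\right) \le \frac{q n^2}{2}$ also holds easily, since $q \ge 1$ and $\frac{n^2}{2} - \frac{(n-1)(n-2)}{2} + 1 > 1$.

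There is no real obstacle. The only points needing care are confirming that the labels are integers, which uses $2n^2 \mid l$, and using disconnectedness (or simply $t < n$) to control $t$ relative to $n$.
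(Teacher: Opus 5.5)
Your proof is correct, and it is essentially the direct estimate the paper intends: the paper omits the proof and defers to the connected case of Dovzhenok and Filuta, where one bounds the largest red label $1+\frac{l}{n^2}\bigl(\frac{t(t-1)}{2}-1\bigr)$ using only $\frac{l}{n^2}\ge 1$ and $t<n$. As you note yourself, the disconnectedness bound $n\ge 2(t+1)$ is not needed, since $t(t-1)<n^2$ already gives a largest label of at most $\frac{l}{n^2}\cdot\frac{t(t-1)}{2}<\frac{l}{2}$.
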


\begin{lemma}\label{l4.2}
	If $\binom{l/2-1}{n-2} > n! \binom{2l-t-3}{n-t-1}$, then $g_{i+1} > g_i$ for any $i \in \{1, 2, \dots, l-1\}$.
\end{lemma}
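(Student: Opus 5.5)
Write $A = \{1,\dots,l\}$ for the upper level and $B = \{l+1,\dots,2l-1\}$ for the lower level. Fix $i\in\{1,\dots,l-1\}$.

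The plan is a symmetry-swap argument in the style of Lemmas~\ref{l3.3}--\ref{l3.8}. Vertices $i$ and $i+1$ in $G_{2l-1}$ almost share a closed neighborhood. Both are adjacent to all of $A$. Vertex $i+1$ has exactly one extra lower neighbor, namely $l+i$ (since $l+i$ is adjacent to $j$ iff $j\ge i+1$). The red edges of $E_R$ may also distinguish them, because $i$ or $i+1$ may be red.

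Let $R_i$ be the set of edges at $i$ or $i+1$ that break the symmetry: the edge $(i+1,l+i)$ together with the red edges at $i$ and at $i+1$. In $G_{2l-1}\setminus R_i$ the transposition $i\leftrightarrow i+1$ is an automorphism. Hence
\[
g_{i+1}-g_i=|\mathcal{P}|-|\mathcal{Q}|,
\]
where $\mathcal{P}$ consists of the copies of $F$ containing $i+1$ but not $i$ and using a symmetry-breaking edge at $i+1$, and $\mathcal{Q}$ is the analogous set for $i$.

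I would split $\mathcal{P}$ into $\mathcal{P}_1$, the copies that do not use the edge $(i+1,l+i)$, and $\mathcal{P}_2$, the copies that do. By Lemma~\ref{l4.1} all red vertices have labels at most $l/2$. Consecutive red vertices are $l/n^2$ apart, so $i$ and $i+1$ cannot both be red. Therefore at most one of them carries red edges.

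Case $i+1$ red. Then $\mathcal{Q}=\varnothing$ and trivially $|\mathcal{P}|\ge|\mathcal{P}_2|>0$.

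Case $i$ red. The red edges at $i$ go to lower vertices $2l-s$ of degree $t$. Any copy in $\mathcal{Q}$ therefore contains $i$ and some vertex $2l-s$, and excludes $i+1$. This fixes at least $t+1$ vertices in the copy: $2l-s$ together with its $t$ neighbors, because a degree-$t$ vertex in a graph with $\delta(F)=t$ must use all its edges. At most $n-t-1$ further vertices remain to be chosen from the other $2l-t-2$ or so vertices. This gives
\[
|\mathcal{Q}|\le c\cdot n!\binom{2l-t-3}{n-t-1}.
\]
The constant $c\le t-1$, coming from the choice of $s$, should be absorbed by careful bookkeeping, possibly by using that at most one such $s$ occurs per red vertex. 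Each red vertex is joined to exactly one lower vertex, which makes $c=1$.

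For the lower bound on $\mathcal{P}_2$, I would not use $K_2$-components as in Section~3, since $\delta(F)\ge2$. Instead, choose $n-2$ vertices $V$ among the black upper vertices in $\{l/2+1,\dots,l\}$ other than $i+1$, so that they lie to the right of $i$ and are adjacent to $l+i$. Then $V\cup\{i+1,l+i\}$ induces a clique missing at most one edge, which contains a copy of $F$ using the edge $(i+1,l+i)$ and avoiding $i$; here $F\ne K_n$ because $F$ is disconnected. This gives at least $\binom{l/2-1}{n-2}$ copies, and the hypothesis yields $|\mathcal{P}_2|>|\mathcal{Q}|$. If $i\ge l/2$, I would choose $V$ from the vertices to the right of $i$ together with those to its left. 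The vertices to the left are not adjacent to $l+i$, but $l+i$ only needs degree at least $t$ inside the copy, so I would embed the component of $F$ containing the mapped vertex accordingly.

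Case neither is red. Then $\mathcal{Q}=\varnothing$ again.

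The main obstacle is the general case where $i$ is red: the exact injection and counting for $\mathcal{Q}$, and making the clique embedding containing $l+i$ work uniformly in $i$. For this I would follow the corresponding argument in Lemma~3.3 of~\cite{6}.
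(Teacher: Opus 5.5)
Your core argument is right, and it is very likely what the omitted proof does. The binomials in the hypothesis are exactly the ones your argument produces.

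Once $(i+1,l+i)$ and the red edges at $i,i+1$ are deleted, $i$ and $i+1$ are twins, so $g_{i+1}-g_i=|\mathcal{P}|-|\mathcal{Q}|$. Because red labels are at least $l/n^2\ge 2$ apart, $\mathcal{Q}\neq\varnothing$ only when $i$ is red.

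In that case the part you call the ``main obstacle'' is already finished.
\begin{itemize}
\item The red vertex $i$ has a single lower neighbour $2l-s$, and $2l-s$ has degree $t$. So every $H\in\mathcal{Q}$ contains $2l-s$ together with its $t$ neighbours, and excludes $i+1$.
\item The vertex $i+1$ is not a neighbour of $2l-s$: it is black and lies below $l-s+1$. The pool for the remaining $n-t-1$ vertices therefore has exactly $2l-t-3$ elements.
\item This gives $|\mathcal{Q}|\le n!\binom{2l-t-3}{n-t-1}$ with $c=1$; no further bookkeeping is needed.
\item Taking $V\subseteq\{i+2,\dots,l\}$, a set of size $l-i-1\ge l/2-1$, makes $V\cup\{i+1,l+i\}$ a full $K_n$, not a clique minus an edge. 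This gives $|\mathcal{P}|\ge\binom{l/2-1}{n-2}$.
\end{itemize}

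The actual gap is the case you treat only vaguely: neither $i$ nor $i+1$ is red and $i$ is close to $l$. Here $\mathcal{Q}=\varnothing$, so you need only $\mathcal{P}\neq\varnothing$, not a binomial bound. Your sketch, however, breaks down for $i\ge l-t+1$.
\begin{itemize}
\item For such $i$, $l+i$ has only $l-i<t$ neighbours to the right of $i$.
\item Contrary to your claim, some vertices to the left are adjacent to $l+i$: namely its $t-(l-i)$ red neighbours coming from $E_R$.
\item Since $\deg(l+i)=t=\delta(F)$, every copy of $F$ through $l+i$ must contain all of these red vertices. A construction that uses only left vertices non-adjacent to $l+i$ therefore cannot produce a copy.
\end{itemize}

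The fix is a uniform embedding that works for every $i$ with $\mathcal{Q}=\varnothing$:
\begin{itemize}
\item Take a vertex $v$ of degree $t$ in $F$ and map $v\mapsto l+i$.
\item Map $N_F(v)$ bijectively onto $t$ neighbours of $l+i$, including $i+1$. All of $N(l+i)$ lies in the upper clique and avoids $i$; for $i\le l-t$, use $t$ vertices of $\{i+1,\dots,l\}$.
\item Send the remaining $n-t-1$ vertices to arbitrary other upper vertices different from $i$; at least $l-t-1\ge n-t-1$ are available.
\end{itemize}
Since the upper level is a clique, this is a copy of $F$ that uses $(i+1,l+i)$ and avoids $i$.
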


\begin{lemma}\label{l4.3}
	The inequality $g_1 > g_{l+1}$ holds.
\end{lemma}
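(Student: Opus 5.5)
The idea is to use the same symmetry argument as in Lemma~\ref{l3.3}: delete one edge so that vertices $1$ and $l+1$ become interchangeable, and then exhibit one copy of $F$ that uses the deleted edge.

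\textbf{Neighborhoods.} By the definition of $E_0$, the lower vertex $l+1$ is adjacent exactly to $2,3,\dots,l$. No edge of $E_R$ is incident to $l+1$, because $E_R$ only involves $2l-t+1,\dots,2l-1$ and $l+1<2l-t+1$. Vertex $1$ is adjacent to $2,\dots,l$ in $E_0$. It is also the first red vertex (take $j=0$), so $E_R$ joins it to $2l-1$ and to nothing else. Hence $1$ and $l+1$ are non-adjacent and have the same open neighborhood $\{2,\dots,l\}$ in the graph $G_{2l-1}\setminus\{(1,2l-1)\}$.

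\textbf{Reduction to one copy.} Since $1$ and $l+1$ are symmetric in $G_{2l-1}\setminus\{(1,2l-1)\}$, swapping them gives a bijection between copies of $F$ through $l+1$ and copies through $1$ that avoid the edge $(1,2l-1)$. This yields
\[
g_1-g_{l+1}=|\mathcal{A}|,\quad \mathcal{A}=\{G\subseteq G_{2l-1}\mid G\cong F,\ (1,2l-1)\in E(G),\ l+1\notin V(G)\}.
\]
So it suffices to show $\mathcal{A}\neq\varnothing$.

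\textbf{Constructing a member of $\mathcal{A}$.} Vertex $2l-1$ has degree exactly $t$. Its neighbors are $l$ (from $E_0$) and the first $t-1$ red vertices, which include $1$. All of these neighbors lie on the upper level, and the upper level is a clique.

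\begin{enumerate}
\item Pick a vertex $v\in V(F)$ with $\deg_F(v)=t=\delta(F)$.
\item Map $v$ to $2l-1$.
\item Map the $t$ neighbors of $v$ bijectively onto the $t$ neighbors of $2l-1$.
\item Map the remaining $n-t-1$ vertices of $F$ injectively to further upper-level vertices outside $N(2l-1)$. This is possible since $l>n$.
\end{enumerate}

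Every edge of $F$ not incident to $v$ goes to a pair of upper-level vertices, which is an edge of the clique. The $t$ edges at $v$ go to edges at $2l-1$. So this is an embedding of $F$, and its image is a subgraph $G\cong F$.

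Some neighbor of $v$ is sent to $1$, so $(1,2l-1)\in E(G)$. The image uses only upper-level vertices together with $2l-1$, so $l+1\notin V(G)$. Thus $G\in\mathcal{A}$, and $g_1>g_{l+1}$.

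\textbf{Main obstacle.} The only delicate point is the symmetry claim. One must confirm that $E_R$ adds no edge at $l+1$ and only the single edge $(1,2l-1)$ at vertex $1$. This follows from the description of $E_R$ together with Lemma~\ref{l4.1}.
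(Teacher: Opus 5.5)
Your proof is correct and follows essentially the paper's route: the edge-deletion symmetry of Lemma~\ref{l3.3}, transplanted to $G_{2l-1}$ with the edge $(1,2l-1)$, plus an explicit copy of $F$ that places a minimum-degree vertex at $2l-1$ and avoids $l+1$. One wording fix: swapping $1$ and $l+1$ is a bijection between copies through $1$ and copies through $l+1$ \emph{inside} $G_{2l-1}\setminus\{(1,2l-1)\}$, since copies through $l+1$ in $G_{2l-1}$ may themselves use $(1,2l-1)$. Adding back the copies that use $(1,2l-1)$ (all of which contain $1$, but only some of which contain $l+1$) is what gives exactly your set $\mathcal{A}$.
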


\begin{lemma}\label{l4.4}
	For any $i \in \{l+1, l+2, \dots, 2l-t-1\}$, we have $g_i > g_{i+1}$.
\end{lemma}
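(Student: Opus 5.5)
Fix $i \in \{l+1, \dots, 2l-t-1\}$. I would compare $i$ and $i+1$ through a symmetry-and-injection argument, the same scheme used for Lemma~\ref{l3.2}.

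First I record the neighbourhoods. For $l+1 \le i \le 2l-t-1$ the vertex $i$ carries no edge of $E_R$, because only $2l-t+1,\dots,2l-1$ receive red edges. Hence $N(i)=\{i-l+1,\dots,l\}$ and $N(i+1)=\{i-l+2,\dots,l\}$. Vertex $i+1$ may itself be $2l-t$ or larger, but $2l-t$ has no red edges either, since red edges start at $2l-t+1$. When $i+1 \ge 2l-t+1$ it can only be $2l-t$ at the top of the range, so in every case in the lemma's range both $i$ and $i+1$ have no red neighbours. Set $a=i-l+1$. Then $N(i+1)=N(i)\setminus\{a\}$, and both vertices lie in the independent lower level, so they are not adjacent.

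The key step is that in $G_{2l-1}\setminus\{(i,a)\}$ the vertices $i$ and $i+1$ have the same neighbourhood. Transposing them is therefore an automorphism of that graph. It follows that
\[
g_i-g_{i+1}=|\mathcal{C}|-|\mathcal{D}|,
\]
where $\mathcal{C}$ is the set of copies of $F$ that contain the edge $(i,a)$ and avoid $i+1$, and $\mathcal{D}$ is the set of copies that contain $i+1$, avoid $i$, and use the edge $(i+1,a)$. The second set comes from the symmetric counterpart. More carefully: copies avoiding $(i,a)$ pair off under the swap, so the difference is the number of copies using $(i,a)$ minus the number of copies that are images of those. Copies using $(i,a)$ and containing $i+1$ map to non-copies, since $(i+1,a)$ would be used alongside $i$ in the swapped graph. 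I would handle this as in \cite{6}.

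Simpler route: every copy of $F$ in $G_{2l-1}-(i,a)$ is a copy in $G_{2l-1}$, and the swap gives a bijection between copies containing $i$ and copies containing $i+1$ inside $G_{2l-1}-(i,a)$. Hence
\[
g_i-g_{i+1}=\#\{\text{copies containing }(i,a)\}\ \ge\ 0 .
\]
It remains to show that at least one copy of $F$ uses the edge $(i,a)$. Every vertex of $N[i]$ except $i$ is an upper-level vertex, and $|N(i)|=2l-i \ge t+1$. So $\{i\}\cup N(i)$ induces a clique of order at least $t+2$. That clique is too small when $n$ is large, so instead I would take $i$ together with $t$ of its neighbours including $a$, plus upper-level vertices $1,\dots$ to fill the rest. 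All of these except $i$ form a clique of size $l \gg n$, and $i$ is adjacent to at least $t$ of them. Since $\delta(F)=t$, embed $F$ by sending a vertex $w$ of minimum degree $t$ to $i$, sending its $t$ neighbours to $t$ vertices of $N(i)$ including $a$ with $w$'s edge to $a$ used, and sending all other vertices of $F$ to distinct upper-level vertices. This works because the upper level is a clique on $l>n$ vertices. The resulting copy contains $(i,a)$, so $g_i>g_{i+1}$.

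The main obstacle is verifying the neighbourhood equality, in particular that no red edges touch $i$ or $i+1$ and that $i$ and $i+1$ are non-adjacent. The edge-removal symmetry then gives the strict inequality without any counting estimate.
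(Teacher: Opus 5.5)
Your approach is the one the paper relies on, and it mirrors the proof of Lemma~\ref{l3.3}: delete the edge $(i,a)$ with $a=i-l+1$, use that $i$ and $i+1$ then have the same neighbourhood $\{a+1,\dots,l\}$, and exhibit one copy of $F$ through $(i,a)$. Your neighbourhood facts are correct: there are no red edges at $i$ or $i+1$, and $i$ and $i+1$ are non-adjacent. However, the identity you settle on in the ``simpler route'', $g_i-g_{i+1}=\#\{\text{copies containing }(i,a)\}$, is false. The swap only pairs off copies lying in $G_{2l-1}\setminus\{(i,a)\}$. A copy that uses the edge $(i,a)$ and also contains the vertex $i+1$ is counted in $g_{i+1}$ as well as in $g_i$, so it cancels. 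The correct identity is
\[
g_i-g_{i+1}=\bigl|\{C\subseteq G_{2l-1}\mid C\cong F,\ (i,a)\in E(C),\ i+1\notin V(C)\}\bigr|,
\]
which is your set $\mathcal{C}$ from the first attempt. The set $\mathcal{D}$ you subtract there is empty, because $(i+1,a)\notin E(G_{2l-1})$ by your own computation $N(i+1)=N(i)\setminus\{a\}$.

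Under the correct identity, a witness through $(i,a)$ gives strictness only if it also avoids $i+1$, and your write-up never checks this. Your embedding does satisfy it: every vertex of $F$ other than the one sent to $i$ is mapped to the upper level, so $i+1\notin V$. Replace the identity with the one above, note that the witness avoids $i+1$, and the proof is complete.
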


\begin{lemma}\label{l4.5}
	If $\binom{l-t}{n-t-1} > n! t \binom{2l-t-4}{n-t-2}$, then $g_{2l-t-1} > g_{2l-1}$.
\end{lemma}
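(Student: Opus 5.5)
The plan is to mimic the proofs of Lemmas~\ref{l3.6} and~\ref{l3.8}. First, write
\[
g_{2l-t-1}-g_{2l-1}=|\mathcal{W}|-|\mathcal{Z}|,
\]
where $\mathcal{W}$ is the set of subgraphs $H\cong F$ of $G_{2l-1}$ with $2l-t-1\in V(H)$ and $2l-1\notin V(H)$, and $\mathcal{Z}$ is the set of subgraphs $G\cong F$ with $2l-1\in V(G)$ and $2l-t-1\notin V(G)$.

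The key structural fact is that $2l-1$ has degree exactly $t=\delta(F)$ in $G_{2l-1}$. Its neighbours are $l$ and the first $t-1$ red vertices $r_1<\dots<r_{t-1}$. So every $G\in\mathcal{Z}$ uses all $t$ edges at $2l-1$, and contains $l$ and $r_1,\dots,r_{t-1}$.

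I then split $\mathcal{Z}$ according to the set $S=\{l-t,l-t+1,\dots,l-2\}$, which has $t-1$ upper-level vertices, none red and none equal to $l$ (by Lemma~\ref{l4.1} and $l-t>\frac{l}{2}$). Let $\mathcal{Z}_1$ consist of the $G\in\mathcal{Z}$ with $V(G)\cap S=\varnothing$, and let $\mathcal{Z}_2=\mathcal{Z}\setminus\mathcal{Z}_1$.

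For $\mathcal{Z}_1$, define a map into $\mathcal{W}$ by $2l-1\to 2l-t-1$ and $r_j\to l-t-1+j$ for $j=1,\dots,t-1$ (so the reds go onto $S$), fixing all other vertices and transporting edges accordingly. This is an isomorphic copy inside $G_{2l-1}$, for three reasons:
\begin{itemize}
\item $2l-t-1$ is adjacent to $l$ and to all of $S$.
\item The upper level is a clique.
\item Each image vertex $u\in S$ has label larger than the red vertex it replaces. Under $E_0$, $u$ is therefore adjacent to every lower vertex $l+1,\dots,l+u-1$ that a red vertex $r<u$ is adjacent to. Under $E_R$, a red vertex is joined only to vertices among $2l-t+1,\dots,2l-1$, and these lie within the range $l+1,\dots,l+u-1$ since $u\ge l-t$.
\end{itemize}
The main thing to verify carefully is this neighbourhood-containment claim, together with the fact that $2l-t-1$ is not used elsewhere in $G$, which holds by the definition of $\mathcal{Z}$. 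The map is injective because it is reversible given $G$. Its image consists of copies in which the copy-neighbours of $2l-t-1$ are exactly $S\cup\{l\}$, and in particular do not include $l-1$.

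For $\mathcal{Z}_2$, each member contains the fixed vertices $2l-1,l,r_1,\dots,r_{t-1}$ and at least one vertex of $S$, which gives at most $t$ choices. It also excludes $2l-t-1$. The remaining $n-t-2$ vertices come from a pool of $2l-1-(t+2)-1=2l-t-4$ vertices, and on each vertex set there are at most $n!$ copies of $F$. Hence
\[
|\mathcal{Z}_2|\le n!\,t\binom{2l-t-4}{n-t-2}.
\]

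For the lower bound, consider copies $H\in\mathcal{W}$ in which $2l-t-1$ is a vertex of degree $t$ adjacent in $H$ exactly to $l-t+1,\dots,l$, and whose other $n-t-1$ vertices are chosen from $\{1,\dots,l-t\}$. Because the upper level is a clique, $F$ minus a minimum-degree vertex embeds into any $n-1$ upper vertices, with that vertex's $t$ neighbours sent to $l-t+1,\dots,l$. This gives at least $\binom{l-t}{n-t-1}$ members of $\mathcal{W}$. None of them lies in the image of $\mathcal{Z}_1$, since each uses the edge $(2l-t-1,l-1)$.

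Therefore
\[
|\mathcal{W}|\ge|\mathcal{Z}_1|+\binom{l-t}{n-t-1}>|\mathcal{Z}_1|+|\mathcal{Z}_2|=|\mathcal{Z}|
\]
under the hypothesis of the lemma, which proves $g_{2l-t-1}>g_{2l-1}$.
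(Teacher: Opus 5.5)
Your proof is correct in substance and follows the paper's route. It has the same four steps: split the copies through $2l-1$ by whether they meet a block of upper vertices just left of $l$; inject those that miss it by moving $2l-1$ and its $t-1$ red neighbours onto $2l-t-1$ and that block; bound the remainder by $n!\,t\binom{2l-t-4}{n-t-2}$; and exhibit $\binom{l-t}{n-t-1}$ further copies through $2l-t-1$ outside the image.

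The only difference is bookkeeping. The paper uses $V_t=\{l-t,\dots,l-1\}$, sends the reds to $\{l-t+1,\dots,l-1\}$, and separates the image from the extra copies by the vertex $l-t$. You use $S=\{l-t,\dots,l-2\}$ and separate by the edge $(2l-t-1,l-1)$. A small benefit of your version is that $U$ ranges over all $(n-t-1)$-subsets of $\{1,\dots,l-t\}$, giving exactly $\binom{l-t}{n-t-1}$. In the paper, $U$ must avoid $V_t\cup\{l\}$, which as written yields only $\binom{l-t-1}{n-t-1}$; this is easily repaired by allowing $l\in U$.

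One justification in your third bullet is false and needs replacing. You claim the $E_R$-neighbours $2l-t+1,\dots,2l-1$ of a red vertex lie in $\{l+1,\dots,l+u-1\}$ because $u\ge l-t$. But every $u\in S$ satisfies $u\le l-2$, so $l+u-1\le 2l-3<2l-1$, and the claim fails for every $u\in S$.

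What is true, and sufficient, is that each red vertex has exactly one $E_R$-neighbour. For the moved vertices $r_1,\dots,r_{t-1}$, that neighbour is $2l-1$ itself. Since $2l-1$ is relocated to $2l-t-1$, the edge $(r_j,2l-1)$ becomes $(l-t-1+j,\,2l-t-1)$, which exists by your first bullet. Combined with your correct $E_0$ argument for the lower neighbours $l+1,\dots,l+r_j-1$, this shows the relocated subgraph lies in $G_{2l-1}$, and the rest of your proof stands.

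As a side remark, $|S|=t-1$, so your bound on $|\mathcal{Z}_2|$ holds even with $t-1$ in place of $t$.
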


\begin{proof}
	In $G_{2l-1}$, consider the vertices $2l-t-1$ and $2l-1$. By construction, vertex $2l-t-1$ is adjacent to exactly $t+1$ vertices, namely $l-t, l-t+1, \dots, l$, while vertex $2l-1$ is adjacent to exactly $t$ vertices
	\vspace{-3mm}
	\[
	1, \, 1+\frac{l}{n^2}, \, 1+\frac{2l}{n^2}, \dots, 1+\frac{(t-2)l}{n^2}, \, l.
	\] 
	We have $g_{2l-t-1} - g_{2l-1} = |\mathcal{P}| - |\mathcal{Q}|$, where
	\[
	\begin{aligned}
		\mathcal{P} &= \{ H \subseteq G_{2l-1} \mid H \cong F, \, 2l-t-1 \in V(H), \, 2l-1 \notin V(H) \}, \\
		\mathcal{Q} &= \{ G \subseteq G_{2l-1} \mid G \cong F, \, 2l-1 \in V(G), \, 2l-t-1 \notin V(G) \}.
	\end{aligned}
	\]
	To prove $g_{2l-t-1} > g_{2l-1}$, it suffices to show that $|\mathcal{P}| > |\mathcal{Q}|$.
	\smallskip
	
	Since vertex $2l-1$ has degree $t$ in $G_{2l-1}$, its neighborhood is preserved in every subgraph from $\mathcal{Q}$. Let $V_t = \{l-t, l-t+1, \dots, l-1\}$. We partition $\mathcal{P}$ and $\mathcal{Q}$ into disjoint subsets:
	\[
	\begin{aligned}
		\mathcal{P}_1 &= \{ H \in \mathcal{P} \mid l-t \notin V(H) \}, \quad &\mathcal{P}_2 &= \{ H \in \mathcal{P} \mid l-t \in V(H) \}, \\
		\mathcal{Q}_1 &= \{ G \in \mathcal{Q} \mid V(G) \cap V_t = \varnothing \}, \quad &\mathcal{Q}_2 &= \{ G \in \mathcal{Q} \mid V(G) \cap V_t \neq \varnothing \}.
	\end{aligned}
	\]
	
First, we establish $|\mathcal{P}_1| \ge |\mathcal{Q}_1|$. Let 
	\vspace{-2mm}
	\[
	L = \Bigl\{1, 1+\frac{l}{n^2}, 1+\frac{2l}{n^2}, \dots, 1+\frac{(t-2)l}{n^2}, 2l-1\Bigr\}, \quad J = \{0, 1, \dots, t-2\}.
	\]
We construct an injection from $\mathcal{Q}_1$ to $\mathcal{P}_1$ by mapping each subgraph $G \in \mathcal{Q}_1$ via the vertex and edge assignments:
\vspace{-2mm}
\[
\begin{aligned}
	2l-1 \to 2l-t-1, \quad &1+\frac{lj}{n^2} \to l-t+j+1, \quad s \to s, \\[1mm]
	(2l-1, l) &\to (2l-t-1, l), \\
	\left(2l-1, 1+\frac{lj}{n^2}\right) &\to \left(2l-t-1, l-t+j+1\right), \\
	\left(1+\frac{lj}{n^2}, 1+\frac{lk}{n^2}\right) \in E(G) &\to (l-t+j+1, l-t+k+1), \\
	\left(s, 1+\frac{lj}{n^2}\right) \in E(G) &\to (s, l-t+j+1), \\
	(s, z) \in E(G) &\to (s, z),
\end{aligned}
\]
where $j, k \in J$ and $s, z \in V(G) \setminus L$.
This mapping yields $|\mathcal{P}_1| \ge |\mathcal{Q}_1|$.
\smallskip
	
Next, we prove $|\mathcal{P}_2| > |\mathcal{Q}_2|$. To upper-bound $|\mathcal{Q}_2|$, we note that every subgraph in $\mathcal{Q}_2$ contains vertex $2l-1$, its $t$ fixed upper-level neighbors, and excludes vertex $2l-t-1$. Furthermore, each such subgraph includes at least one vertex from $V_t$, for which there are exactly $t$ choices, yielding \mbox{$\binom{2l-t-4}{n-t-2}$} choices for the remaining \mbox{$n-t-2$} vertices. Since on any fixed set of $n$ vertices there are at most $n!$ subgraphs isomorphic to $F$, we obtain
\[	
|\mathcal{Q}_2| \le n! t \binom{2l-t-4}{n-t-2}.
\]
	
To lower-bound $|\mathcal{P}_2|$, we note that since $l \ge 2n^2 > n$, any subset $U$ of $n-t-1$ upper-level vertices disjoint from $V_t \cup \{l\}$ induces a clique. Thus, there exists a subgraph in $\mathcal{P}_2$ with vertex set $U \cup V_t \cup \{2l-t-1\}$, where vertex $2l-t-1$ has degree $t$ in the subgraph. Since there are $\binom{l-t}{n-t-1}$ choices for $U$, it follows that 
\[|\mathcal{P}_2| \ge \binom{l-t}{n-t-1}.\]
	
By assumption, $\binom{l-t}{n-t-1} > n! t \binom{2l-t-4}{n-t-2}$, so $|\mathcal{P}_2| > |\mathcal{Q}_2|$. Combined with $|\mathcal{P}_1| \ge |\mathcal{Q}_1|$, this gives $|\mathcal{P}| > |\mathcal{Q}|$.
\end{proof}

\begin{lemma}\label{l4.6}
	If $\frac{l}{n^2} \binom{l/n^2}{n-t-2} > n! (t-1) \binom{2l-t-4}{n-t-2}$ and $F$ contains a connected component with a vertex of minimum degree $t$ that is not isomorphic to $K_{t+1}$, then $g_i > g_{i+1}$ for all $i \in \{2l-t, 2l-t+1, \dots, 2l-2\}$.
\end{lemma}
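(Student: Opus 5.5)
Fix $i \in \{2l-t, \dots, 2l-2\}$ and compare the vertices $i$ and $i+1$ the same way as in Lemma~\ref{l4.5}. We write
\[
g_i - g_{i+1} = |\mathcal{P}| - |\mathcal{Q}|,
\]
where $\mathcal{P}$ is the set of copies of $F$ containing $i$ but not $i+1$, and $\mathcal{Q}$ is the set of copies containing $i+1$ but not $i$.

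\emph{Neighbourhoods.} By construction both vertices have degree exactly $t$, so every copy of $F$ through either vertex uses its full neighbourhood.
\begin{itemize}
\item The vertex $i+1$ is adjacent to $l$, possibly to some upper vertices $i+1-l+1,\dots$ near $l$, and to a block of red vertices.
\item The vertex $i$ has the corresponding baseline neighbours shifted one place to the left, so it has exactly one more baseline neighbour, and one fewer red neighbour.
\end{itemize}
Concretely, if $i+1 = 2l-s$, then $i+1$ has $s$ baseline neighbours $\{l-s+1,\dots,l\}$ and $t-s$ red neighbours. The vertex $i$ has baseline neighbours $\{l-s,\dots,l\}$ and $t-s-1$ red neighbours.

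\emph{Splitting $\mathcal{Q}$.} Let $u = l-s$ be the extra baseline neighbour of $i$. We split $\mathcal{Q}$ according to whether the copy avoids $u$ together with a suitable small set of vertices near the top of the clique, analogous to $V_t$ in Lemma~\ref{l4.5}. On the ``generic'' part $\mathcal{Q}_1$ we build an injection into the corresponding part $\mathcal{P}_1$ of $\mathcal{P}$:
\begin{itemize}
\item send $i+1 \mapsto i$;
\item send one red neighbour of $i+1$ that is not a neighbour of $i$ to $u$;
\item keep all other vertices fixed;
\item transport edges as in Lemma~\ref{l4.5}.
\end{itemize}
This works because the red vertex and $u$ are both upper-level vertices, adjacent to everything in the clique that is used. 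The only care needed is that edges from the red vertex to other lower-level vertices remain valid. I would restrict $\mathcal{Q}_1$ so that these cases are absent, or check that $u$ is adjacent to all relevant lower vertices, since $u$ lies near $l$ and so most lower vertices see it. This gives $|\mathcal{P}_1| \ge |\mathcal{Q}_1|$.

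\emph{Bounding the remainder $\mathcal{Q}_2$.} Every copy in $\mathcal{Q}_2$ contains $i+1$, its $t$ neighbours, and at least one vertex from a set of at most $t-1$ designated vertices. This gives
\[
|\mathcal{Q}_2| \le n!\,(t-1)\binom{2l-t-4}{n-t-2}.
\]

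\emph{Lower bound for $\mathcal{P}_2$ (main obstacle).} We need at least $\frac{l}{n^2}\binom{l/n^2}{n-t-2}$ copies of $F$ in $\mathcal{P}_2$, and this is exactly where the hypothesis on $F$ enters. Take a component $C$ of $F$ with a vertex $w$ of degree $t$ such that $C \not\cong K_{t+1}$. We want copies in which
\begin{itemize}
\item $w$ is placed at $i$;
\item the neighbourhood of $w$ is placed on the neighbours of $i$, including the red ones;
\item the remainder of $F$ is placed inside a clique.
\end{itemize}
Since $C \ne K_{t+1}$, $C$ contains either a vertex outside $N[w]$ or a non-adjacent pair inside $N(w)$. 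This non-edge lets us use non-adjacent pairs among the neighbours of $i$, whose red neighbours lie far apart but are still all pairwise adjacent in the upper clique. It also lets us choose freely an additional vertex from a gap of length $l/n^2$ between consecutive red labels, and the remaining $n-t-2$ vertices from a block of size $l/n^2$, which together give the factor $\frac{l}{n^2}\binom{l/n^2}{n-t-2}$. The delicate point is making sure these copies are distinct and all lie in $\mathcal{P}_2$, i.e.\ that the chosen vertices fall in the designated set and avoid $i+1$. Counting them carefully gives $|\mathcal{P}_2| \ge \frac{l}{n^2}\binom{l/n^2}{n-t-2}$.

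\emph{Conclusion.} The hypothesis then gives $|\mathcal{P}_2| > |\mathcal{Q}_2|$. Combined with $|\mathcal{P}_1| \ge |\mathcal{Q}_1|$, this yields $|\mathcal{P}| > |\mathcal{Q}|$, that is, $g_i > g_{i+1}$.
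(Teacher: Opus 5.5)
Your skeleton matches the argument the paper has in mind. The paper omits this proof, but Lemmas~\ref{l4.5} and~\ref{l4.7} and the form of the hypothesis show the intended route: split $\mathcal{Q}$, inject the generic part into $\mathcal{P}$, bound the rest by $n!(t-1)\binom{2l-t-4}{n-t-2}$, and exhibit $\frac{l}{n^2}\binom{l/n^2}{n-t-2}$ copies in $\mathcal{P}$ outside the image. However, your injection fails as soon as $d\ge 1$, i.e.\ for $i\ge 2l-t+1$ (which occurs whenever $t\ge 3$).

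Each red vertex receives exactly one edge of $E_R$, so the red neighbourhoods of $i$ and $i+1$ are \emph{disjoint}. Take $d=t+i-2l$ and the notation of Lemma~\ref{l4.7}: $i+1$ has red neighbours $k_1<\dots<k_{d+1}$, and $i$ has the different red neighbours $m_1<\dots<m_d$. So it is false that only one red neighbour of $i+1$ lies outside $N(i)$. If you keep $k_1,\dots,k_d$ fixed, the edges $(i+1,k_p)$ become pairs $(i,k_p)$, which are not edges of $G_{2l-1}$, and the image is not a subgraph.

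The correct map is $i+1\mapsto i$, $k_p\mapsto m_p$ for $p\le d$, $k_{d+1}\mapsto i-l+1$, and the identity elsewhere. It is defined on $\mathcal{Q}_1=\{G\in\mathcal{Q}: V(G)\cap\{m_1,\dots,m_d,i-l+1\}=\varnothing\}$. Edges to lower-level vertices survive because an upper vertex $x$ sees exactly $l+1,\dots,l+x-1$ in $E_0$, and $k_p<m_p$, $k_{d+1}<i-l+1$. So the designated set consists of the $d+1\le t-1$ neighbours of $i$ that are not neighbours of $i+1$. These are mostly red vertices with small labels, not vertices near the top of the clique, and this set is exactly where the factor $t-1$ comes from.

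The lower bound for $\mathcal{P}_2$ is not actually proved. You never say what keeps your copies out of the image. The mechanism you suggest cannot work, because $N(i)$ lies in the upper clique and has no non-adjacent pairs. What is needed is the following.

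\begin{enumerate}
\item Set $\mathcal{P}_2=\mathcal{P}\setminus\mathcal{P}_1$, where $\mathcal{P}_1$ is the image of $\mathcal{Q}_1$.
\item Every copy in $\mathcal{Q}$ contains $k_{d+1}$, since $\deg(i+1)=t=\delta(F)$. Hence in every image copy, the vertex $i-l+1$ (the image of $k_{d+1}$) has its lower-level neighbours inside $\{l+1,\dots,l+k_{d+1}-1\}\cup\{i\}$.
\item Use the hypothesis on $C$ this way: since $\deg w=t=\delta(F)$ and $C\not\cong K_{t+1}$, we get $C\ne N[w]$. So some $y\notin N[w]$ is adjacent to some $x\in N(w)$; the ``non-adjacent pair'' alternative plays no role.
\item Place $w$ at $i$ and $N(w)$ on $N(i)$, with $x$ at $i-l+1$.
\item Place $y$ at a lower vertex $j$ with $l+k_{d+1}\le j\le l+k_{d+1}+\frac{l}{n^2}-1$, giving $\frac{l}{n^2}$ choices.
\item Place the remaining $n-t-2$ vertices in the block $\{\frac{l}{2}+1,\dots,\frac{l}{2}+\frac{l}{n^2}\}$ of black upper vertices.
\item All required edges exist: $j$ is adjacent to every upper vertex with label at least $j-l+1\le k_{d+1}+\frac{l}{n^2}\le\frac{l}{2}+1$, and this covers $N(i)$ and the block.
\item The edge $(i-l+1,j)$ with $j\ge l+k_{d+1}$ certifies that the copy is not an image.
\end{enumerate}

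This gives $|\mathcal{P}_2|\ge\frac{l}{n^2}\binom{l/n^2}{n-t-2}$. Both repairs are needed: your injection is valid only for $i=2l-t$, and the count for $\mathcal{P}_2$ is missing for every $i$.
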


\begin{lemma}\label{l4.7}
	If every vertex of minimum degree $t$ in $F$ belongs to a $K_{t+1}$-component, then $g_i < g_{i+1}$ for all $i \in \{2l-t, 2l-t+1, \dots, 2l-2\}$.
\end{lemma}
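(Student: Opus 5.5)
The plan is to reduce $g_i$, for each low-degree vertex $i$, to a count of copies of a smaller graph in a vertex-deleted subgraph, and then compare consecutive vertices by an injection.

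\emph{Rigidity of low-degree vertices.} Fix $i\in\{2l-t,\dots,2l-1\}$; by construction $\deg_{G_{2l-1}}(i)=t$.
\begin{itemize}
\item For $i=2l-t$, the neighbours are $l-t+1,\dots,l$, with no red neighbours.
\item For $i=2l-s$ with $1\le s\le t-1$, the neighbours are the $s$ upper vertices $l-s+1,\dots,l$ together with a set $R_s$ of $t-s$ red vertices.
\end{itemize}
In all cases $N(i)$ consists of upper-level vertices only, so $N[i]$ induces a $K_{t+1}$ in $G_{2l-1}$.

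Let $H\cong F$ be a subgraph with $i\in V(H)$. Since $\delta(F)=t$, the vertex $i$ must have degree exactly $t$ in $H$, and all of $N(i)$ must lie in $V(H)$. Hence $i$ is a vertex of minimum degree of $F$. By the hypothesis, the component of $H$ through $i$ is a $K_{t+1}$, and it must equal the clique on $N[i]$.

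Let $F'$ denote $F$ with one $K_{t+1}$-component deleted. Copies of $F$ containing $i$ then correspond bijectively to copies of $F'$ in $G_{2l-1}-N[i]$. This gives the formula
\[
g_i=c\bigl(G_{2l-1}-N[i]\bigr),
\]
where $c(\cdot)$ counts subgraphs isomorphic to $F'$. I would check the bijection carefully when $F$ has several $K_{t+1}$-components; since we fix which component contains $i$, each copy is counted once.

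\emph{Comparing consecutive vertices.} Fix $s\in\{1,\dots,t-1\}$ and compare $i=2l-s-1$ with $i+1=2l-s$. We need
\[
c\bigl(G_{2l-1}-N[2l-s-1]\bigr)<c\bigl(G_{2l-1}-N[2l-s]\bigr).
\]
Write $A=G_{2l-1}-N[2l-s-1]$ and $B=G_{2l-1}-N[2l-s]$. The deleted sets differ as follows:
\begin{itemize}
\item $A$ loses $2l-s-1$, the upper vertices $l-s,\dots,l$, and the red set $R_{s+1}$ of size $t-s-1$;
\item $B$ loses $2l-s$, the upper vertices $l-s+1,\dots,l$, and $R_s$ of size $t-s$.
\end{itemize}
So, apart from the common part, $A$ contains $2l-s$ and the $t-s$ reds of $R_s$. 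In their place, $B$ contains $2l-s-1$, the upper vertex $l-s$, and the $t-s-1$ reds of $R_{s+1}$.

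I would define an injection $\varphi$ from copies of $F'$ in $A$ to copies in $B$ as follows:
\begin{itemize}
\item map $2l-s\mapsto 2l-s-1$;
\item map one red $r_0\in R_s$ to $l-s$;
\item map the remaining reds of $R_s$ bijectively onto $R_{s+1}$;
\item fix every other vertex.
\end{itemize}
Edges among upper vertices are preserved because the upper level is a clique. An edge from $2l-s$ to a vertex of $R_s$ does not occur in $A$, since $R_s$ has been deleted in $B$'s complement; I would double-check this bookkeeping. Edges from $2l-s$ to remaining upper vertices go to the same vertices from $2l-s-1$, which has a superset of the upper neighbours. The delicate point is edges from a red vertex $r\in R_s$ to the lower vertices $2l-1,\dots$ that remain in $A$. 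These must be sent to genuine edges from the image vertex. For $r_0\mapsto l-s$ this is fine only if those lower vertices are also adjacent to $l-s$, which I would verify from the staircase structure of $E_0$. If it fails, I would instead choose the correspondence between $R_s$ and $R_{s+1}\cup\{l-s\}$ according to the pattern in Fig.~\ref{fig3}, matching reds attached to the same lower vertex $2l-j$.

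\emph{Strictness and the main obstacle.} For strict inequality, I would exhibit a copy of $F'$ in $B$ that is not in the image of $\varphi$. A natural candidate is a copy inside the clique on upper vertices that uses $l-s$ together with all of $R_{s+1}$ but avoids $2l-s-1$. Such a copy exists because $l\ge 2n^2$ gives plenty of room. If instead the images of $\varphi$ always contain $l-s$, I would take a copy in $B$ using many unmatched upper vertices and avoiding $l-s$.

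The main obstacle is verifying that $\varphi$ is edge-preserving, given the irregular attachment of red vertices to lower vertices. If a clean vertex map cannot be found, the fallback is an argument in the style of Lemma~\ref{l4.5}. There I would split copies by whether they touch the differing vertices, and dominate the copies that do by a large clique count of order $\binom{l-\cdot}{|F'|-1}$.
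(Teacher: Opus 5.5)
Your reduction $g_i=c(G_{2l-1}-N[i])$ is correct, and your map $\varphi$ is the paper's injection in different language: the paper sends $k_{d+1}\to i-l+1=l-s$ and $k_p\to m_p$, which is your $\varphi$ with $r_0=\max R_s$. The edge check you postponed does go through for any choice of the bijection. Every vertex of $R_{s+1}\cup\{l-s\}$ has a larger label than every vertex of $R_s$, and the $E_0$-lower neighbourhood $\{l+1,\dots,l+u-1\}$ of an upper vertex $u$ grows with $u$. Your remark that $A$ has no edges from $2l-s$ to $R_s$ is wrong; those edges are present. This is harmless: they map onto the edges from $2l-s-1$ to $\varphi(R_s)=\{l-s\}\cup R_{s+1}$, which is exactly its neighbourhood in $B$. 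In any case $2l-s$ has degree $t-s<t$ in $A$, so it lies in no copy of $F'$.

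The genuine gap is the strictness step: both witnesses you propose lie in the image of $\varphi$. $\varphi$ is a bijection $V(A)\to V(B)$ that carries the upper level of $A$ onto the upper level of $B$, and both levels are cliques. So any copy of $F'$ in $B$ with all its vertices in the upper level is the image of its $\varphi$-preimage. That covers both the copy using $l-s$ and $R_{s+1}$ and the copy avoiding $l-s$. Your fallback rests on a misreading: images are not forced to contain or avoid any particular vertex. A copy in $B$ escapes the image only if it uses an edge of $B$ that is not the image of an edge of $A$. All such edges join a vertex of $\varphi(R_s)$ to a lower-level vertex.

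The missing idea is to use one of these edges. With $r_0=\max R_s$, the edge $(l-s,l+r_0)$ lies in $B$, but its preimage $(r_0,l+r_0)$ is not an edge of $G_{2l-1}$. The set $\{l+r_0\}\cup\{r_0+1,\dots,l-s\}$ avoids $N[2l-s]$ and induces a clique in $B$ of size at least $l/2-t+2>n-t-1=|F'|$. Place $F'$ there with one of its edges on $(l-s,l+r_0)$; $F'$ has an edge since $\delta(F')\ge t\ge 2$. This copy has no preimage, which gives the strict inequality. It is exactly the paper's witness, built from the edge $(i-l+1,l+k_{d+1})$ and the clique $U$.
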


\begin{proof}
	Fix $i \in \{2l-t, 2l-t+1, \dots, 2l-2\}$ and let $d = t + i - 2l$. In $G_{2l-1}$, vertex $i$ has degree $t$ and is adjacent to $2l-i$ black vertices $i-l+1, i-l+2, \dots, l$ and $d$ red vertices; if $i \ge 2l-t+1$, these red neighbors, in increasing order of their labels, are denoted by $m_1, m_2, \dots, m_d$. Vertex $i+1$, in turn, is adjacent to $2l-i-1$ black vertices $i-l+2, i-l+3, \dots, l$ and $d+1$ red vertices, denoted in increasing order by $k_1, k_2, \dots, k_{d+1}$. For $i \ge 2l-t+1$, by the definition of $E_R$, the constraints on $l$, and Lemma~\ref{l4.1}, these labels satisfy (see Fig.~\ref{fig4})
	\[
	k_1 < k_2 < \dots < k_{d+1} < m_1 < m_2 < \dots < m_d < i-l+1.
	\]
	
	For $i = 2l-t$, vertex $i$ has no red neighbors ($d = 0$  implies $\{m_1, m_2, \dots, m_d\} = \varnothing$), and the sole red neighbor $k_1$ of vertex $i+1$ satisfies $k_1 < i-l+1$.
	
	\begin{figure}[ht]
		\centering
		\includegraphics[width=12cm]{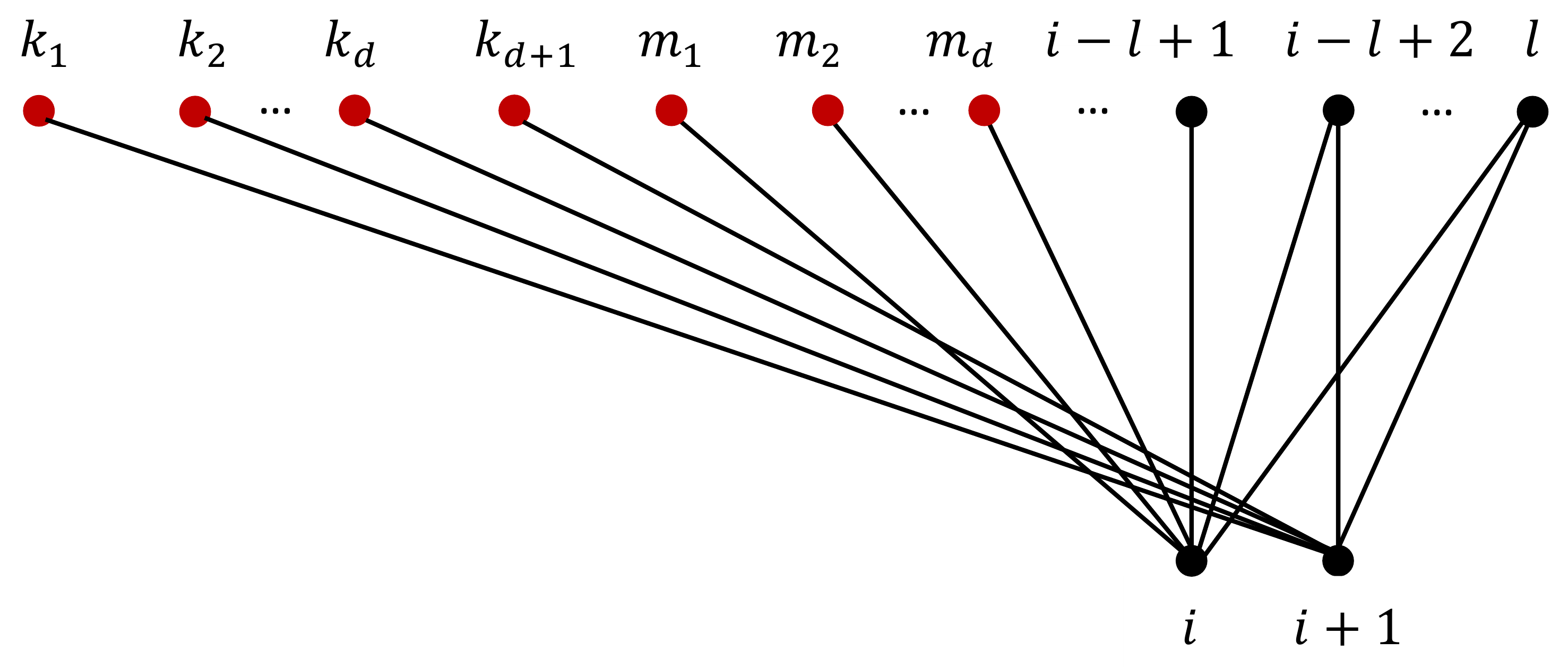}
		\caption{Neighborhoods of $i$ and $i+1$ in $G_{2l-1}$ for $i \in \{2l-t+1, 2l-t+2, \dots, 2l-2\}$.}
		\label{fig4}
	\end{figure}
	
	Next, we have $g_{i+1} - g_i = |T| - |S|$, where
	\[
	\begin{aligned}
		T &= \{ G \subseteq G_{2l-1} \mid G \cong F, \, i+1 \in V(G), \, i \notin V(G) \}, \\
		S &= \{ H \subseteq G_{2l-1} \mid H \cong F, \, i \in V(H), \, i+1 \notin V(H) \}.
	\end{aligned} 
	\]
	To prove $g_i < g_{i+1}$, it suffices to show that $|T| > |S|$. To this end, we define an injection from $S$ to $T$.	
	Since $H \cong F$ and $G \cong F$ for any subgraphs $H \in S$ and $G \in T$, and every vertex of minimum degree in $F$ lies in a $K_{t+1}$-component, these subgraphs must contain $K_{t+1}$-components $C_1 \subseteq H$ and $C_2 \subseteq G$ on the vertex sets:
	\[
	\begin{aligned}
		V(C_1) &= \{i, m_1, m_2, \dots, m_d, i-l+1, i-l+2, \dots, l\}, \\
		V(C_2) &= \{i+1, k_1, k_2, \dots, k_{d+1}, i-l+2, i-l+3, \dots, l\}.
	\end{aligned}
	\]
	Let $I = \{1, 2, \dots, d\}$, where $I = \varnothing$ if $i = 2l-t$, and define the vertex set
	\[
	W = \{i, k_{d+1}\} \cup \{m_p, k_p \mid p \in I\} \cup \{i-l+1, i-l+2, \dots, l\}.
	\]
Each $H \in S$ is mapped to $T$ by replacing $C_1$ with $C_2$, while the rest of $H$ is mapped via:
\[
\begin{aligned}
	k_p \in V(H) \to m_p, \quad k_{d+1} \in V(H) \to i-l+1, &\quad s \to s, \\
	(k_p, k_q) \in E(H) \to (m_p, m_q), \quad (k_p, k_{d+1}) \in E(H) \to &(m_p, i-l+1), \\
	(s, k_p) \in E(H) \to (s, m_p), \quad (s, k_{d+1}) \in E(H) \to (s, i-l+1), &\quad (s, z) \in E(H) \to (s, z),
\end{aligned}
\]
where $p, q \in I$ and $s, z \in V(H) \setminus W$.
	
	To show that the mapping is not surjective, we note that by the definition of $G_{2l-1}$ and the strict inequality $k_{d+1} < i-l+1$, no subgraph $H \in S$ contains the edge $(k_{d+1}, l+k_{d+1})$. Consequently, no image under this mapping can include the edge $(i-l+1, l+k_{d+1})$.
	
Conversely, there exists a subgraph $G^* \in T$ containing the edge $(i-l+1, l+k_{d+1})$. Defining the vertex set
\[
U = \{l+k_{d+1}\} \cup \{k_{d+1}+1, k_{d+1}+2, \dots, i-l+1\},
\]
the constraints $i \ge 2l-t$, $k_{d+1} \le \frac{l}{2}$, and $l \ge 2n^2$ yield
\[	
|U| = i - l - k_{d+1} + 2 \ge 2l - t - l - \frac{l}{2} + 2 > \frac{l}{2} - t > n - t - 1.
\]
In $G^*$, we fix a $K_{t+1}$-component on the vertex set $\{i+1, k_1, \dots, k_{d+1}, i-l+2, \dots, l\}$. Since the vertex set $U$ induces a clique and $|U| > n - t - 1$, it is sufficiently large to accommodate the remaining components of $G^*$ such that the edge $(i-l+1, l+k_{d+1})$ is explicitly included. Consequently, $G^*$ has no preimage in $S$, which implies $|T| > |S|$.
\end{proof}

\subsection{Main result for $\delta(F) \ge 2$}

\begin{theorem} \label{t4.1}
	For any disconnected graph $F$ with $\delta(F) \ge 2$, there exist infinitely many $F$-irregular graphs.
\end{theorem}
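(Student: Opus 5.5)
The proof will mirror that of Theorem~\ref{t3.1}, now using the graph $G_{2l-1}$ and Lemmas~\ref{l4.2}--\ref{l4.7}. Let $n=|F|$ and $t=\delta(F)\ge 2$. I would first fix a Condition 2 on $l$ that collects all hypotheses used in Section~4:
\[
2n^2 \mid l, \quad \binom{l/2-1}{n-2} > n!\binom{2l-t-3}{n-t-1}, \quad \binom{l-t}{n-t-1} > n!\,t\binom{2l-t-4}{n-t-2},
\]
\[
\frac{l}{n^2}\binom{l/n^2}{n-t-2} > n!\,(t-1)\binom{2l-t-4}{n-t-2}.
\]
The next step is to show that infinitely many $l$ satisfy Condition 2. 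The first two binomial inequalities compare a top index of degree $n-2$ (respectively $n-t-1$) with one of degree $n-t-1$ (respectively $n-t-2$). Both are strictly smaller, so Lemma~\ref{l3.10} applies with $x>y$, taking $m_1,m_2\in\{1,2\}$ and $b$'s such as $-1,-t-3$. The third inequality has equal lower indices $n-t-2$ and a linear prefactor $\frac{1}{n^2}l$, so Lemma~\ref{l3.11} applies with $p_1=1/n^2$, $r_1=0$, $m_1=n^2$, $m_2=1$. Each lemma gives a threshold $N$ valid for $l$ divisible by the relevant LCM, and $2n^2$ is a common multiple. Hence every $l\ge\max(N_1,N_2,N_3)$ with $2n^2\mid l$ satisfies Condition 2, and there are infinitely many such $l$. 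Since $n\ge 6$ and $t\ge2$ give $n-t-2\ge 0$, every binomial is meaningful. I would check this index bookkeeping (that $x\in\mathbb{Z}^+$, $x>y$, and so on) carefully.

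Next, I would fix such an $l$ and chain the inequalities. Lemma~\ref{l4.2} gives $g_l>g_{l-1}>\dots>g_1$. Lemma~\ref{l4.3} gives $g_1>g_{l+1}$. Lemma~\ref{l4.4} gives $g_{l+1}>g_{l+2}>\dots>g_{2l-t}$. Lemma~\ref{l4.5} gives $g_{2l-t-1}>g_{2l-1}$. There are two cases for the bottom vertices $2l-t,\dots,2l-1$.

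\textbf{Case A:} every vertex of degree $t$ in $F$ lies in a $K_{t+1}$-component. Then Lemma~\ref{l4.7} gives $g_{2l-t}<g_{2l-t+1}<\dots<g_{2l-1}$. Together with $g_{2l-1}<g_{2l-t-1}$, this places all of $g_{2l-t},\dots,g_{2l-1}$ strictly below $g_{2l-t-1}$ and pairwise distinct.

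\textbf{Case B:} otherwise, some component contains a vertex of degree $t$ and is not $K_{t+1}$. Then Lemma~\ref{l4.6} gives $g_{2l-t}>\dots>g_{2l-1}$. Here I need $g_{2l-t}<g_{2l-t-1}$, which Lemma~\ref{l4.4} already provides, since $i=2l-t-1$ lies in its range.

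In both cases the full sequence is totally ordered: the upper level is increasing, all upper-level values exceed $g_1$, which exceeds every lower-level value, and the lower-level values are strictly ordered. So $G_{2l-1}$ is $F$-irregular, and the graphs for different $l$ have distinct orders $2l-1$. This yields infinitely many $F$-irregular graphs.

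The main obstacle I anticipate is not the chaining but confirming that Lemmas~\ref{l3.10}--\ref{l3.11} apply to the third inequality. Its left-hand top index $l/n^2$ requires $n^2\mid l$, which is guaranteed by $2n^2\mid l$. The inequality also needs $x=n-t-2\ge 0$; if $n-t-2=0$ the binomials equal $1$ and the inequality reduces to linear growth beating a constant, which is immediate. The Case A ordering also needs care, because Lemma~\ref{l4.5} compares $2l-t-1$ with the top element $2l-1$ of that increasing run, and that comparison is exactly what separates the two runs.
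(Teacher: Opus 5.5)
Your proposal is correct and follows the paper's proof essentially step for step: the same case split on whether every degree-$t$ vertex of $F$ lies in a $K_{t+1}$-component, the same chains via Lemmas~\ref{l4.2}--\ref{l4.5} with~\ref{l4.7} or Lemmas~\ref{l4.2}--\ref{l4.4} with~\ref{l4.6}, the only harmless difference being that you merge the paper's two case-specific conditions on $l$ into one. One small correction: $n-t-2\ge 0$ does not follow from $n\ge 6$ and $t\ge 2$ alone but from disconnectedness (each of the at least two components has at least $t+1$ vertices, so $n\ge 2t+2$ and in fact $n-t-2\ge t\ge 2$), which also means your subcase $n-t-2=0$ never occurs.
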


\begin{proof}
	Let $n = |F|$ and $t = \delta(F) \ge 2$. Since $F$ is disconnected, we have $n - t - 2 > 0$. We proceed by analyzing two cases based on the structure of $F$.
	
	\textbf{Case 1.} $F$ contains a connected component with a vertex of minimum degree $t$ that is not isomorphic to $K_{t+1}$.
	
	Let Condition 2 be defined by the following system:
	\[
	l \in \mathbb{N}, \quad 2n^2 \mid l, \quad \binom{l/2-1}{n-2} > n! \binom{2l-t-3}{n-t-1}, \quad \dfrac{l}{n^2} \binom{l/n^2}{n-t-2} > n!(t-1)\binom{2l-t-4}{n-t-2}.
	\]
	
	Since $n - 2 > n - t - 1$, Lemma~\ref{l3.10} guarantees the existence of a constant $N_1$ such that the first inequality holds for all even $l \ge N_1$. Similarly, by Lemma~\ref{l3.11}, there exists a constant $N_2$ such that the second inequality holds for all $l \ge N_2$ with $n^2 \mid l$. Therefore, Condition 2 is satisfied by any positive integer $l \ge \max(N_1, N_2)$ such that $2n^2 \mid l$, yielding infinitely many valid values of $l$.
	
	Now, consider the graph $G_{2l-1}$ constructed for an arbitrary $l$ satisfying Condition 2. Lemmas~\ref{l4.2}--\ref{l4.4} and \ref{l4.6} imply:
	\[
	g_l > g_{l-1} > \dots > g_2 > g_1 > g_{l+1} > g_{l+2} > \dots > g_{2l-1}.
	\]
	Thus, the graph $G_{2l-1}$ is $F$-irregular. Because Condition 2 holds for infinitely many positive integers $l$, the family of graphs $G_{2l-1}$ provides infinitely many $F$-irregular graphs.
	
	\textbf{Case 2.} Every vertex of minimum degree $t$ in $F$ lies in a connected component isomorphic to $K_{t+1}$.
	
	Let Condition 3 be defined by the following system:
	\[
	l \in \mathbb{N}, \quad 2n^2 \mid l, \quad \binom{l/2-1}{n-2} > n! \binom{2l-t-3}{n-t-1}, \quad \binom{l-t}{n-t-1} > n! t \binom{2l-t-4}{n-t-2}.
	\]
	
	Since $n - 2 > n - t - 1$ and $n - t - 1 > n - t - 2$, Lemma~\ref{l3.10} guarantees the existence of a constant $N_3$ such that the first inequality holds for all even $l \ge N_3$.\\ Similarly, by Lemma~\ref{l3.11}, there exists a constant $N_4$ such that the second inequality holds for all $l \ge N_4$. Therefore, Condition 3 is satisfied by any positive integer $l \ge \max(N_3, N_4)$ such that $2n^2 \mid l$, yielding infinitely many valid values of $l$.
	
	Now, consider the graph $G_{2l-1}$ constructed for an arbitrary $l$ satisfying Condition 3. Lemmas~\ref{l4.2}--\ref{l4.5} and \ref{l4.7} imply:
	\[
	g_l > g_{l-1} > \dots > g_1 > g_{l+1} > g_{l+2} > \dots > g_{2l-t-1} > g_{2l-1} > g_{2l-2} > \dots > g_{2l-t}.
	\]
	Thus, the graph $G_{2l-1}$ is $F$-irregular. Because Condition 3 holds for infinitely many positive integers $l$, the family of graphs $G_{2l-1}$ provides infinitely many $F$-irregular graphs.
\end{proof}

\section{Proof of Theorem 1.1}

We are now ready to assemble our results to establish the main classification theorem.

\begin{proof}[Proof of Theorem 1.1]
	Let $F$ be a disconnected graph. 
	\smallskip
	
	If $\delta(F) \ge 2$, the result follows immediately from Theorem~\ref{t4.1}. 
	\smallskip
	
	If $\delta(F) = 1$, the result follows directly from Theorem~\ref{t3.1}.
	\smallskip
	
	Now, suppose that $\delta(F) = 0$. 
	\smallskip
	
	If $|E(F)| \le 1$, the non-existence of non-trivial $F$-irregular graphs is established by Lemmas~\ref{l2.1} and~\ref{l2.2}. 
	\smallskip
	
	Thus, let $|E(F)| \ge 2$ and express the graph as $F =H+\overline{K_m} $, where $m$ is a positive integer and $\delta(H) \ge 1$. Since $|E(F)| \ge 2$, we have $|E(H)| \ge 2$. 
	\smallskip
	
	We proceed based on the connectivity of $H$:
	\begin{itemize}
		\item If $H$ is connected, the existence of infinitely many $F$-irregular graphs follows from Theorem~\ref{t2.1} combined with the Simple Theorem~\cite[p.~11]{6} for connected graphs.
		\item If $H$ is disconnected, then either $\delta(H) = 1$ or $\delta(H) \ge 2$. Consequently, the result follows from Theorem~\ref{t2.1} in conjunction with Theorems~\ref{t3.1} and~\ref{t4.1}, respectively.
	\end{itemize}
	
	This completes the proof of Theorem~\ref{t1.1}.
\end{proof}

\end{document}